\newtheoremstyle{exampstyle}
  {\topsep} 
  {\topsep} 
  {\itshape} 
  {} 
  {\bfseries} 
  {.} 
  {.5em} 
  {} 
\theoremstyle{exampstyle}
\numberwithin{equation}{section}
\newtheorem{lemma}{Lemma}[section]
\newtheorem{theorem}{Theorem}[section]
\newtheorem{problem}{Problem}[section]
\newtheorem{remark}{Remark}[section]
\newtheorem{Algorithm}{Algorithm}[section]
\newtheorem{example}{Example}
\let\oldref\ref
\renewcommand{\ref}[1]{(\oldref{#1})}  
\renewcommand{\eqref}[1]{(\oldref{#1})}
\newbox\boxaddrone \newbox\boxaddrtwo
\def\N+{n\in\mathbb{N}^{+}}
\def\n{\partial{\overrightarrow{\bf n}}}
\begin{document}

\title{\large\textbf{Determine the point source of the heat equation with sparse boundary measurements}}
\author[1]{Qiling Gu\thanks{guql@sustech.edu.cn}} 
\author[1]{Wenlong Zhang\thanks{Corresponding author, zhangwl@sustech.edu.cn}}
\author[2,3]{Zhidong Zhang\thanks{zhangzhidong@mail.sysu.edu.cn}}
\affil[1]{\normalsize{Department of Mathematics, Southern University of Science and Technology (SUSTech), 1088 Xueyuan Boulevard, University Town of
Shenzhen, Xili, Nanshan, Shenzhen, Guangdong Province, PR China}}
\affil[2]{\normalsize{School of Mathematics (Zhuhai), Sun Yat-sen University, Zhuhai 519082, Guangdong, China}}
\affil[3]{\normalsize{Guangdong Province Key Laboratory of Computational Science, Sun Yat-sen University, Guangzhou 510000, Guangdong, China}}

\maketitle

\begin{abstract}
\sloppy
\noindent  In this work the authors consider the recovery of the point source in the heat equation. The used data is the sparse boundary measurements. The uniqueness theorem of the inverse problem is given. After that, the numerical reconstruction is considered. We propose a numerical method to reconstruct the location of a Dirac point source by reformulating the inverse problem as a least-squares optimization problem, which is efficiently solved using a gradient descent algorithm. Numerical experiments confirm the accuracy of the proposed method and demonstrate its robustness to noise. \\
\fussy
\sloppy
\noindent {AMS subject classifications:} 35R30, 65M32\\
\fussy
\noindent Keywords: inverse source problem, point source, uniqueness, sparse boundary measurements.
\end{abstract}

\section{Introduction.}

\subsection{Mathematical statement.}
In this article, the considered mathematical model is given as:
\begin{equation}
\label{eq-ibvp}
\left\{
\begin{aligned}
(\partial_t-\Delta) u(x,t) &= F(x), && (x,t) \in \Omega\times(0,T),\\
u(x,t) &= 0, &&(x,t)\in\Omega\times\{0\}\cup \partial\Omega\times(0,T) .
\end{aligned}
\right.
\end{equation}
The domain $\Omega$ is set as the unit disc in $\mathbb R^2$; the vanishing boundary and initial conditions are used. The source term $F(x)$ is defined as
\begin{equation*}
 F(x)=\delta_{s_*}=\delta_{(x-x_*,y-y_*)}=\delta_{(r-r_*,\theta-\theta_*)},
\end{equation*}
where the location $(x_*,y_*)$ (or $(r_*,\theta_*)$ in the polar coordinate) of the point source is unknown, and $\delta$ is the Dirac function. In this inverse problem, the used measurements are the boundary flux data, with the formulation 
\begin{equation}\label{data}
\frac{\partial u}{\n} (z,t),\ t\in(0,T),
\ z\in Z_{ob}\subset\partial\Omega.
\end{equation}
The notation $Z_{ob}$ means the observation area on the boundary $\partial\Omega$. Hence, the inverse problem in this work can be stated as follows. 
\begin{problem}
\label{prob-inver}
With the boundary flux data \eqref{data}, 
can we uniquely determine the location of the point source $s_*$?
\end{problem}
We will restrict the size of the observation area $Z_{ob}$ as small as possible. This is the reason we call the data as the sparse boundary data.

\subsection{Background and literature.}
In the practical applications of inverse problems, the control of cost must be one of the core issues. This is the reason that people prefer to use sparse data, which could decrease the expense of equipments, labor, computation and so on.  

Recently, the research on the inverse problems with sparse data has draw more and more attention. In \cite{LiLiuShi:2023}, the authors recover the unknown source in the inverse scattering problems with sparse far-field data; 
\cite{LiYangZhangZhang:2021} considers the inverse obstacle problem with phaseless data; 
\cite{LiuMeng:2023} uses sparse near-field measurements to solve the inverse source problems. For the inverse source problems in the heat equation defined on the unit disc on $\mathbb R^2$, \cite{RundellZhang:2020} uniquely determines the variable separable source with the flux data generated from finite observation points on the boundary; while the authors in \cite{LiZhang:2020} use the same data to recover the semi-discrete source. Furthermore, for the general parabolic equation, \cite{LinZhangZhang:2022, SunZhang:2022} prove that the semi-discrete unknown source can be uniquely determined by the boundary data, where the observation area can be an arbitrary nonempty open subset of the boundary.  

In this work, we set the unknown source as the point source. We could list some applications of this setting. For instance, if equation \eqref{eq-ibvp} is used to describe the diffusion of pollutants \cite{EganMahoney:1972}, it makes sense that we model the factory releasing pollutants as the point source. However, the setting of point source will cause some difficulties in analysis. This is because the point source is not in the space $L^2(\Omega)$, and sequentially we may not follow the techniques in \cite{RundellZhang:2020, LinZhangZhang:2022}.

Several numerical methods have been developed to solve the point source problem. \cite{Ling:2006} investigates the inverse identification of point sources in two-dimensional heat equations. This approach combines the method of fundamental solutions and collocation techniques to reconstruct the locations and intensities of sources from scattered data, utilizing the Levenberg-Marquardt optimization method to improve stability and ensure accuracy. \cite{BenBelgacem:2012} examines the identifiability of pointwise sources in the one-dimensional Fisher’s reaction-diffusion equation, leveraging unique continuation theorems and maximum principles to provide theoretical guarantees for source detection under both steady and moving conditions. \cite{Nakaguchi2012} proposes an algebraic reconstruction algorithm to identify the location and magnitude of a moving point source in a three-dimensional scalar wave equation. The method utilizes observation data and linear algebra techniques to solve the derived algebraic relations. 
\cite{Andrle2011} introduces a numerical method for identifying a moving pointwise source in a one-dimensional advection-dispersion-reaction equation. The methodology involves transforming the original equation into a dispersion-reaction form and solving the inverse problem through either least-squares minimization or the Kohn-Vogelius approach, supported by Crank-Nicolson time discretization and finite element methods for spatial approximation. \cite{Andrle2012} identifies multiple moving pollution sources in an advection-dispersion-reaction equation by reformulating the inverse problem into least-squares and Kohn-Vogelius minimization. This approach utilizes Crank-Nicolson time discretization, finite element methods for spatial discretization, and a quasi-Newton optimization algorithm with the adjoint state method for gradient computation.  \cite{Bruckner2000} reconstructs point wave sources in a one-dimensional wave equation by formulating the inverse problem as a combination of a well-posed part, addressed via Volterra integral equations, and an ill-posed part, regularized using truncated singular value decomposition  and related discretization methods. The method leverages Fourier analysis and eigenfunction expansions to represent the solution, ensuring stability and robustness even in the presence of noisy observations. \cite{Hu2024} develops a singularity enriched neural network  for point source identification in the Poisson equation, leveraging the fundamental solution to capture singularities and neural networks to approximate the regular part. The parameters are optimized through empirical loss minimization via gradient-based methods.  \cite{Kandaswamy2009} presents a non-iterative analytic sensing method that leverages analytic sensors and the reciprocity gap principle to transform the inverse problem of point source localization and intensity estimation into a generalized sampling problem, efficiently solving it with separated linear and nonlinear steps. 
\cite{Faria2020} combines the method of fundamental solutions with a genetic algorithm to reconstruct point sources in the Poisson equation. In this approach, the method of fundamental solutions generates solutions, while the genetic algorithm minimizes the cost function to address the inverse problem. Similarly, \cite{Faria2022} employs the method of fundamental solutions to reconstruct pointwise sources in the modified Helmholtz equation. The algorithm addresses the inverse problem by minimizing a boundary least-squares functional, representing point sources as a linear combination of fundamental solutions. Sensitivity analysis is incorporated to ensure stability, and Simpson’s integration is used for numerical calculations. This meshless method achieves high accuracy and rapid convergence in both two- and three-dimensional cases, even in the presence of noise.
\cite{Baratchart2005} employs rational and meromorphic approximation techniques to locate pointwise sources or small inclusions in two-dimensional domains from boundary measurements. The approach leverages Hardy spaces and best rational approximation to characterize source singularities, utilizing the distribution of poles to efficiently recover source locations and intensities. This method guarantees robust and convergent algorithms with high resolution for inverse problems governed by the Laplace equation. \cite{Chen2022,Wang2023} apply the gradient descent method to solve the least-squares regularization optimization problem for inverse source problems in parabolic equations. \cite{ElBadia2000} investigates an inverse source problem for elliptic equations, utilizing Cauchy problem solvers and algebraic methods to identify the number, locations, and characteristics of monopolar and dipolar sources based on boundary measurements. \cite{Ren2019} reconstructs point sources in the Helmholtz equation within heterogeneous media, leveraging stability estimates and projection-based numerical methods to address the inverse problem with both theoretical rigor and numerical experiments. The study combines adjoint equations and  FEM simulations to estimate the strength and location of point sources in urban environments, validated through wind tunnel experiments and simulations. \cite{Kovalets2011} introduces an algorithm that integrates variational methods with computational fluid dynamics models to estimate the location and intensity of stationary point sources in complex urban environments. The method minimizes a cost function and employs source-receptor functions derived from adjoint equations, achieving high accuracy and robustness in both synthetic and real-world measurement scenarios.
 \cite{Liu2023} addresses the identification of acoustic point sources in a two-layered medium by analyzing sparse far-field patterns across multiple frequencies. A novel direct sampling method is proposed to reconstruct source locations and strengths with theoretical guarantees of stability and uniqueness.
\cite{LeNiliot2004}  employs the boundary element method  and a parameter estimation approach to identify the locations and strengths of point heat sources from boundary measurements, offering insights into the reliability of the results through confidence interval analysis. \cite{Zhang2023} tackles the joint inverse problem of identifying acoustic point sources and obstacles in a two-dimensional domain using Cauchy data. By employing a decomposition strategy and sampling-type imaging methods, the study achieves accurate reconstructions of both the source locations and the obstacle boundary.

\subsection{Main result and outline.}
For Problem \ref{prob-inver}, we prove the uniqueness theorem and give a positive answer. Meanwhile, the size of measured area $Z_{ob}$ is limited to two appropriately
chosen points, i.e. $Z_{ob}=\{z_1,z_2\}\subset\partial\Omega$.
This reflects the sparsity in the title. More precisely, we state the uniqueness theorem as follows, which is the main result of this work. 
\begin{theorem}
\label{thm-unique}
Set $z_\ell=(\cos\theta_\ell,\sin\theta_\ell)\in \partial\Omega,\ \ell=1,2$
 to be the boundary observation points and suppose the following condition is fulfilled,
\begin{equation}
\label{condi-z}
\theta_1 - \theta_2 \not\in \pi\mathbb Q,\ \mathbb Q
\ \text{is the set of rational numbers.}
\end{equation}
Given two unknown point source $(r_*,\theta_*)$ and $(\tilde r_*,\tilde \theta_*)$, we denote corresponding solutions of equation \eqref{eq-ibvp} by $u$ and $\tilde u$, respectively. If 
\begin{equation*}
\label{condi-obser}
\frac{\partial u}{\n} ( z_\ell,t)= \frac{\partial \tilde u}{\n} (z_\ell,t),\ t\in(0,T),\ \ell=1,2,
\end{equation*}
then
$$r_*=\tilde r_*,\ \theta_*=\tilde\theta_*.$$
\end{theorem}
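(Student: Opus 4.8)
The plan is to diagonalize the problem in the Dirichlet spectral basis of the unit disc and exploit the time-analyticity of the boundary flux, reducing the uniqueness question to a countable family of algebraic identities among Bessel values that the irrationality hypothesis \eqref{condi-z} resolves. Let $\phi_{n,k}(r,\theta)=J_{|n|}(j_{|n|,k}r)\,e^{in\theta}$ be the eigenfunctions of $-\Delta$ with vanishing boundary data, where $j_{m,k}$ is the $k$-th positive zero of $J_m$ and $\lambda_{n,k}=j_{|n|,k}^2$. For the time-independent source $F=\delta_{s_*}$ with zero initial data, Duhamel's principle gives
\begin{equation*}
u(x,t)=\sum_{n,k}\frac{\overline{\phi_{n,k}(s_*)}}{\|\phi_{n,k}\|^2}\,\frac{1-e^{-\lambda_{n,k}t}}{\lambda_{n,k}}\,\phi_{n,k}(x),
\end{equation*}
so differentiating in the outward normal at $z_\ell=(1,\theta_\ell)$ produces a series whose angular part is $e^{in\theta_\ell}$ and whose temporal part is $1-e^{-\lambda_{n,k}t}$, with the radial normal derivative contributing the factor $j_{|n|,k}J_{|n|}'(j_{|n|,k})$. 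Since $\delta_{s_*}\notin L^2(\Omega)$ (the difficulty flagged in the introduction) the coefficients do not decay, so I would first record that parabolic smoothing makes $t\mapsto\frac{\partial u}{\n}(z_\ell,t)$ real-analytic on $(0,\infty)$ and that the series converges there; this is what upgrades the finite observation window $(0,T)$ to all $t>0$.

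Next I would separate the time frequencies. Writing $w=u-\tilde u$, the hypothesis gives $\frac{\partial w}{\n}(z_\ell,\cdot)\equiv0$ on $(0,T)$, hence on $(0,\infty)$ by analyticity. Grouping the series by distinct eigenvalue $\mu$ and using linear independence of $\{1\}\cup\{e^{-\mu t}\}$ (equivalently, isolating the Laplace-transform poles at $p=-\mu$) yields, for every $\mu$,
\begin{equation*}
\sum_{(n,k):\,\lambda_{n,k}=\mu} d_{n,k}\,j_{|n|,k}J_{|n|}'(j_{|n|,k})\,e^{in\theta_\ell}=0,\qquad \ell=1,2,
\end{equation*}
where $d_{n,k}$ is the difference of the two sources' coefficients, proportional to $\overline{\phi_{n,k}(s_*)}-\overline{\phi_{n,k}(\tilde s_*)}$. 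To know which $(n,k)$ share an eigenvalue I would invoke Bourget's hypothesis (Siegel's theorem): $J_m$ and $J_{m'}$ have no common positive zero for integers $m\neq m'$. Hence each eigenvalue group is exactly $\{(m,k),(-m,k)\}$ for $m\ge1$ and $\{(0,k)\}$ for $m=0$.

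The decisive step is then the angular determinant. For $m\ge1$ the two equations ($\ell=1,2$) form the homogeneous system with coefficient matrix $\bigl(\begin{smallmatrix}e^{im\theta_1}&e^{-im\theta_1}\\ e^{im\theta_2}&e^{-im\theta_2}\end{smallmatrix}\bigr)$, whose determinant equals $2i\sin\!\bigl(m(\theta_1-\theta_2)\bigr)$. This is precisely where \eqref{condi-z} enters: $\theta_1-\theta_2\notin\pi\mathbb Q$ forces $\sin(m(\theta_1-\theta_2))\neq0$ for \emph{every} $m\ge1$, so the determinant never vanishes and $d_{m,k}=d_{-m,k}=0$, while the $m=0$ equation gives $d_{0,k}=0$ directly (using $J_0'(j_{0,k})\neq0$). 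Thus $d_{n,k}=0$ for all $(n,k)$, i.e. $\phi_{n,k}(s_*)=\phi_{n,k}(\tilde s_*)$ for all $(n,k)$. Finally I would conclude $s_*=\tilde s_*$ because the eigenfunctions separate interior points: the relations $J_0(j_{0,k}r_*)=J_0(j_{0,k}\tilde r_*)$ force $r_*=\tilde r_*=:r_0$ by completeness of the Fourier–Bessel system in $L^2([0,1],r\,dr)$, and then choosing $k$ with $J_1(j_{1,k}r_0)\neq0$ (possible for $r_0>0$) gives $e^{i\theta_*}=e^{i\tilde\theta_*}$, hence $\theta_*=\tilde\theta_*$; the case $r_0=0$ makes the angle vacuous.

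I expect the genuinely delicate points to be twofold. First, the use of \eqref{condi-z} through the nonvanishing of the $2\times2$ determinant is exactly what allows only two observation points to suffice, and it must be shown to apply uniformly in $m$. Second, the low regularity of the point source means the frequency separation in the second step cannot rest on $L^2$ spectral theory as in \cite{RundellZhang:2020,LinZhangZhang:2022}; it must instead be justified through the analyticity of the flux and uniqueness of Dirichlet-series representations, and the appeal to Bourget's hypothesis to control eigenvalue multiplicities is what keeps the group structure — and hence the clean $2\times2$ system — available.
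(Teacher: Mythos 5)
Your proposal is correct and its skeleton coincides with the paper's: both expand the boundary flux as a Dirichlet series in time with angular factors $e^{-im\theta_\ell}$, separate the eigenvalue groups by uniqueness of Dirichlet series (the paper's Lemma \ref{lemma_uniqueness_Dirichlet}, quoted from \cite{RundellZhang:2020}), invoke Bourget's hypothesis to ensure each group is exactly the pair $\pm m$, and annihilate each group through the $2\times 2$ system with determinant $2i\sin\bigl(m(\theta_1-\theta_2)\bigr)\neq 0$ furnished by \eqref{condi-z} (the paper's Lemma \ref{lemma_uniqueness_*}). The genuinely different step is how the flux representation is justified. You differentiate the Duhamel eigenfunction series termwise at the boundary and appeal to parabolic smoothing; since $\delta_{s_*}\notin L^2(\Omega)$ the coefficients do not decay, and the convergence of the normally differentiated series at $r=1$ is precisely the delicate point, which you flag but leave as a claim. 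The paper avoids termwise boundary differentiation altogether: it constructs the harmonic truncations $\eta_z^N$ in \eqref{defi-psi} and the adjoint solutions $w_z^N$ of \eqref{eq-adjoint}, obtains $-\frac{\partial u}{\partial \overrightarrow{\mathbf n}}(z,t)=\lim_{N\to\infty}\int_\Omega F\,w_z^N\,dx=\lim_{N\to\infty}w_z^N(s_*,t)$ by Green's identities (Lemmas \ref{lem_measurement_1} and \ref{lem-ipfp}), and supplies the needed pointwise Fourier convergence on $\partial\Omega$ via the smoothness of the flux proved in the Appendix (Lemma \ref{lemma_smooth}); evaluating $w_z^N$ at the interior point $s_*$ requires only interior convergence, which is harmless. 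At the endgame you also diverge, to your benefit: the paper recovers $r_*$ using only the first eigenvalue, from $J_0'=-J_1<0$ so that $J_0$ is positive and strictly decreasing on $(0,\lambda_1^{1/2})$ --- more elementary than your completeness-of-Fourier--Bessel argument --- but it then deduces $\theta_*=\tilde\theta_*$ from \eqref{result} without verifying that some mode with $m\neq 0$ has $J_{|m|}(\lambda_n^{1/2}r_*)\neq 0$; your explicit choice of $k$ with $J_1(j_{1,k}r_0)\neq 0$, and your remark that $r_0=0$ renders the angle vacuous, fill in exactly what the paper leaves implicit. One small economy you miss: Lemma \ref{lemma_uniqueness_Dirichlet} only needs the zero set to have an accumulation point in $\mathbb{C}^+$, so equality of the fluxes on $(0,T)$ suffices as is, and your analytic continuation of the flux to $(0,\infty)$ is unnecessary.
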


The rest of this article is structured as follows. In section \ref{section_pre}, we collect several preliminary results, which contain the eigensystem of the Laplacian $\Delta$ on the unit disc in $\mathbb R^2$, and the auxiliary set $\{\xi_l\}$ of harmonic functions. In section \ref{section_unique}, we give the proof of Theorem \ref{thm-unique}. After that, we consider the numerical reconstructions of the inverse problems in sections \ref{section_numerical} and \ref{section_example}. In section \ref{section_numerical}, numerical algorithms are developed to reconstruct the Dirac point source. The forward problem is solved by reformulating the heat equation in polar coordinates, employing finite element methods for spatial discretization and the backward Euler scheme for time integration, while the inverse problem is addressed using a gradient descent optimization approach. Finally, in section \ref{section_example}, some numerical experiments are presented to validate the effectiveness and robustness of the proposed method.

\section{Preliminaries.}\label{section_pre}
In this section, we collect several preliminary knowledge for the future proofs. 
\subsection{\texorpdfstring{Dirichlet eigensystem of $-\Delta$.}{Dirichlet eigensystem of -Delta}} 
We denote the eigensystem of the operator
$-\Delta$ on $\Omega$ with Dirichlet boundary condition 
by $\{(\lambda_n,\varphi_n)\}_{n=1}^\infty$
(multiplicity counted). Due to the self-adjointness of $-\Delta$, we have that 
$$
0<\lambda_1\le \cdots \le \lambda_n\le \cdots
\to\infty,\quad \mbox{as $n\to \infty$,}
$$
and $\{\varphi_n\}_{n=1}^\infty$ form an orthonormal basis of $L^2(\Omega)$. 
Also, \cite{GrebenkovNguyen:2013} gives the representations of the eigenfunctions $\{\varphi_n\}_{n=1}^\infty$ as 
\begin{equation}\label{eigenfunction}
\varphi_n(r,\theta)=\omega_nJ_{|m(n)|}(\lambda_n^{1/2}r)e^{im(n)\theta },
\ n\in\mathbb N^+.
\end{equation}
Here $(r,\theta)$ are the polar coordinates on $\Omega$, $\{\omega_n\}$ are the normalized coefficients to make sure
$\|\varphi_n\|_{L^2(\Omega)}=1$, and
$J_{|m(n)|}(\cdot)$ is the Bessel function of order $|m(n)|$
with $\lambda_n^{1/2}$ as its zero point. The Bessel orders $m$
depend on the choice of $n$ and we use the notation $m(n)$ to show
the dependence (sometimes we may use $J_m$ for short).
The next remark can be found in \cite{LiZhang:2020}, which concerns the structure of eigenfunctions \eqref{eigenfunction}.
\begin{remark}
From Bourget's hypothesis, proved in \cite{Siegel:2014},
there exist no common positive zeros between two Bessel functions
with different nonnegative integer orders. Also recall that
$J_{-m}(r)=(-1)^mJ_m(r)$, given an
eigenvalue $\lambda_{n_0}$, $\lambda_{n_0}^{1/2}$ can only be the zero
of $J_{\pm m(n_0)}(\cdot)$. Hence the multiplicity for $\lambda_{n_0}$
is two if $m(n_0)$ is nonzero, otherwise, it will be one.

In the case of $m(n_0)\ne 0$, by setting
$\lambda_{n_0}=\lambda_{n_0+1}$, the corresponding eigenpairs are given as
$$(\lambda_{n_0}, \omega_{n_0}J_{|m(n_0)|}(\lambda_{n_0}^{1/2}r)e^{i|m(n_0)|\theta }),
\quad (\lambda_{n_0+1}, \omega_{n_0+1}J_{|m(n_0)|}(\lambda_{n_0+1}^{1/2}r)e^{-i|m(n_0)|\theta }).$$
Now setting $m(n_0)=|m(n_0)|=-m(n_0+1)$, the representation \eqref{eigenfunction} is consistency and $m$ is uniquely determined by the value of $n$. See \cite{GrebenkovNguyen:2013} for details about the structure of $\{\varphi_n\}_{n=1}^\infty$.
\end{remark}

The next lemma concerns the values of the normalized parameters $\{\omega_n\}$. 
\begin{lemma}\label{omega}
The normalization coefficients $\omega_n$ in eigenfunctions \eqref{eigenfunction} satisfy 
$$\omega_n = \pi^{-1/2} \left[J_{|m(n)|+1}(\lambda_n^{1/2}) \right]^{-1},\  n\in\mathbb N^+.$$
\end{lemma}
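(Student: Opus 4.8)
The plan is to compute the normalization constant directly from the $L^2(\Omega)$ inner product of the eigenfunction $\varphi_n$ with itself and impose $\|\varphi_n\|_{L^2(\Omega)}=1$. Writing $\varphi_n(r,\theta)=\omega_n J_{|m(n)|}(\lambda_n^{1/2}r)e^{im(n)\theta}$ and using the area element $r\,dr\,d\theta$ on the unit disc, I would first integrate out the angular part: since $|e^{im(n)\theta}|^2=1$, the integral over $\theta\in[0,2\pi)$ contributes exactly a factor $2\pi$. This reduces the computation to the radial integral, so that
\begin{equation*}
\|\varphi_n\|_{L^2(\Omega)}^2
= 2\pi\,\omega_n^2 \int_0^1 \left[J_{|m(n)|}(\lambda_n^{1/2}r)\right]^2 r\,dr = 1.
\end{equation*}

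Next I would evaluate the radial integral using the standard Bessel normalization identity. The key classical fact is that for any order $\nu\ge 0$ and any positive zero $j$ of $J_\nu$ (i.e.\ $J_\nu(j)=0$), one has the Lommel-type formula
\begin{equation*}
\int_0^1 \left[J_\nu(j r)\right]^2 r\,dr = \tfrac{1}{2}\left[J_{\nu+1}(j)\right]^2 = \tfrac{1}{2}\left[J_\nu'(j)\right]^2.
\end{equation*}
Here the crucial observation, already guaranteed by the setup in \eqref{eigenfunction}, is that $\lambda_n^{1/2}$ is precisely a zero of $J_{|m(n)|}$, so the Dirichlet boundary condition $\varphi_n=0$ on $\partial\Omega=\{r=1\}$ is exactly what makes this endpoint-evaluated formula apply with the boundary term vanishing. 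Substituting $\nu=|m(n)|$ and $j=\lambda_n^{1/2}$ gives $\int_0^1 [J_{|m(n)|}(\lambda_n^{1/2}r)]^2 r\,dr = \tfrac{1}{2}[J_{|m(n)|+1}(\lambda_n^{1/2})]^2$.

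Finally I would combine the two pieces. Plugging the radial integral back in yields $\|\varphi_n\|_{L^2(\Omega)}^2 = 2\pi\,\omega_n^2 \cdot \tfrac{1}{2}[J_{|m(n)|+1}(\lambda_n^{1/2})]^2 = \pi\,\omega_n^2 [J_{|m(n)|+1}(\lambda_n^{1/2})]^2$, and setting this equal to $1$ and solving for $\omega_n$ gives the claimed $\omega_n = \pi^{-1/2}[J_{|m(n)|+1}(\lambda_n^{1/2})]^{-1}$ (choosing the positive root by convention). The computation itself is routine once the Lommel identity is invoked; the only genuine subtlety is making sure the hypothesis that $\lambda_n^{1/2}$ is a zero of $J_{|m(n)|}$ is used to eliminate the boundary contribution in the Lommel formula, which is precisely where the Dirichlet eigensystem structure enters. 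I would therefore treat recalling and correctly instantiating the Bessel integral identity as the main (and essentially only) obstacle, since everything else is direct substitution.
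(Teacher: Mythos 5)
Your proof is correct: the angular integral contributing the factor $2\pi$, the Lommel normalization identity $\int_0^1 \left[J_\nu(jr)\right]^2 r\,dr = \tfrac{1}{2}\left[J_{\nu+1}(j)\right]^2$ valid precisely because $j=\lambda_n^{1/2}$ is a zero of $J_{|m(n)|}$ (the Dirichlet condition), and the final solve for $\omega_n$ all check out, with the only minor caveat being the harmless sign/phase ambiguity you already flag, since $J_{|m(n)|+1}(\lambda_n^{1/2})$ need not be positive. The paper gives no computation at all, simply citing \cite[Lemma 2.6]{LiZhang:2020}, and your direct calculation is exactly the standard argument underlying that citation, so your route is essentially the same as the paper's, just made self-contained.
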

\begin{proof}
The proof follows from \cite[Lemma 2.6]{LiZhang:2020}. 
\end{proof}

\subsection{\texorpdfstring{The set of harmonic functions $\{\xi_l\}_{l=-\infty}^\infty$.}{\protect The set of harmonic functions xi l for l from -infinity to infinity}}
We start to build the set $\{\xi_l\}_{l=-\infty}^\infty$, which would be used to represent the boundary measurements. 

For $l \in\mathbb Z$, we define the harmonic functions
$$
\xi_l(x) = \xi_l(r,\theta) = 2^{-1/2}\pi^{-1/2} r^{|l|} e^{i l\theta}.
$$
Setting $r=1$, that is, $|x|=1$, it is not difficult to check that the
functions $\{\xi_l(1,\theta)\}_{l=-\infty}^\infty$ form an
orthonormal basis of $L^2(\partial\Omega)$. For
$z=(\cos\theta_z,\sin\theta_z)\in\partial\Omega$,
we define $\eta_z^N\in C^\infty(\overline\Omega)$ as
\begin{equation}
\label{defi-psi}
\eta_z^N(x) := \sum_{l=-N}^N \xi_l(z) \xi_{-l}(r,\theta),\quad (r,\theta)\in[0,1]\times[0,2\pi),
\end{equation}
and state the next lemma. 

\begin{lemma}
\label{lem-appro-delta}
Let $u$ satisfy equation \eqref{eq-ibvp}. For $ z\in\partial\Omega$, we have
$$
\lim_{N\to\infty} \int_{\partial\Omega} \eta_z^N (x)
\frac{\partial u}{\n} (x,t)\ dx =
\frac{\partial u}{\n} (z,t),\quad a.e.\ t\in(0,T).
$$
\end{lemma}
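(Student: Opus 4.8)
The plan is to recognize that on $\partial\Omega$ the function $\eta_z^N$ is exactly a Dirichlet kernel, so that the left-hand side is the $N$-th partial Fourier sum of the boundary flux, and then to invoke pointwise convergence of Fourier series. First I would evaluate \eqref{defi-psi} on the unit circle. Since $\xi_l(z)=2^{-1/2}\pi^{-1/2}e^{il\theta_z}$ and $\xi_{-l}(1,\theta)=2^{-1/2}\pi^{-1/2}e^{-il\theta}$, we have $\xi_l(z)\xi_{-l}(1,\theta)=(2\pi)^{-1}e^{il(\theta_z-\theta)}$, and hence
$$\eta_z^N(1,\theta)=\sum_{l=-N}^N\xi_l(z)\xi_{-l}(1,\theta)=\frac{1}{2\pi}\sum_{l=-N}^N e^{il(\theta_z-\theta)}=:D_N(\theta_z-\theta),$$
the classical Dirichlet kernel. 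Parametrising $\partial\Omega$ by arc length $dx=d\theta$ and writing $g(\theta,t)$ for the boundary flux $\frac{\partial u}{\n}$ at the point with angle $\theta$, the integral becomes
$$\int_{\partial\Omega}\eta_z^N(x)\frac{\partial u}{\n}(x,t)\,dx=\int_0^{2\pi}D_N(\theta_z-\theta)\,g(\theta,t)\,d\theta=S_N[g(\cdot,t)](\theta_z),$$
i.e.\ the $N$-th partial sum of the Fourier series of $g(\cdot,t)$ evaluated at the observation angle $\theta_z$. Thus the assertion is equivalent to the pointwise convergence $S_N[g(\cdot,t)](\theta_z)\to g(\theta_z,t)$ as $N\to\infty$, for a.e.\ $t$.

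Next I would show that, for every $t\in(0,T)$, $g(\cdot,t)$ is smooth in $\theta$, so that its Fourier series converges pointwise everywhere. The clean route is interior regularity: the source $F=\delta_{s_*}$ is supported at the interior point $s_*$, so in a neighborhood $\mathcal{N}$ of $\partial\Omega$ the right-hand side of \eqref{eq-ibvp} vanishes and $u$ solves the homogeneous heat equation with the smooth boundary datum $u|_{\partial\Omega}=0$; by the smoothing property of the heat semigroup, $u$ is $C^\infty$ up to $\partial\Omega$ for each $t\in(0,T)$, whence $g(\cdot,t)\in C^\infty(\partial\Omega)$. Equivalently, one can verify this from the eigenfunction expansion: combining Lemma \ref{omega} with the Bessel identity $J_{|m(n)|}'(\lambda_n^{1/2})=-J_{|m(n)|+1}(\lambda_n^{1/2})$ at the zero $\lambda_n^{1/2}$ gives $\partial_r\varphi_n(1,\theta)=-\pi^{-1/2}\lambda_n^{1/2}e^{im(n)\theta}$, so that
$$g(\theta,t)=-\pi^{-1/2}\sum_n\langle F,\varphi_n\rangle\,\frac{1-e^{-\lambda_n t}}{\lambda_n^{1/2}}\,e^{im(n)\theta},\qquad \langle F,\varphi_n\rangle=\omega_nJ_{|m(n)|}(\lambda_n^{1/2}r_*)e^{-im(n)\theta_*}.$$
Because $r_*<1$, the factor $J_{|m(n)|}(\lambda_n^{1/2}r_*)$ lies in the exponentially decaying (turning-point) regime once the Bessel order is large, and standard uniform Bessel asymptotics make the coefficients of the high angular frequencies $e^{im\theta}$ decay faster than any power, giving absolute and uniform convergence of the series together with all its $\theta$-derivatives.

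Finally, since a smooth (indeed merely $C^1$ or H\"older) $2\pi$-periodic function has uniformly convergent Fourier series, I would conclude $S_N[g(\cdot,t)](\theta_z)\to g(\theta_z,t)=\frac{\partial u}{\n}(z,t)$ for every $t\in(0,T)$, which is even stronger than the claimed a.e.\ statement. The main obstacle is this middle step, namely quantifying the angular regularity of the boundary flux, either through the parabolic interior estimate or through controlling the competition between the decay of $\langle F,\varphi_n\rangle$ and the growth $\lambda_n^{1/2}$ via Bessel asymptotics. I stress that mere $L^2(\partial\Omega)$ regularity of $g(\cdot,t)$ would not suffice for the pointwise statement at the prescribed $z$: Carleson's theorem combined with Fubini would only yield convergence for almost every observation point, rather than at the fixed point $z$.
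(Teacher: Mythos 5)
Your main argument is structurally the same as the paper's: the paper likewise rewrites $\int_{\partial\Omega}\eta_z^N(x)\,\frac{\partial u}{\n}(x,t)\,dx$ as the $N$-th partial Fourier sum $\sum_{l=-N}^N\langle \frac{\partial u}{\n}(\cdot,t),\xi_l\rangle_{L^2(\partial\Omega)}\,\xi_l(z)$ (your Dirichlet-kernel computation is exactly this identity in a different notation) and then deduces pointwise convergence from smoothness of the boundary flux, which it proves separately in the Appendix (Lemma \ref{lemma_smooth}). The genuine difference lies in how that smoothness is established. The paper does it by an explicit construction: it represents the singular part of $u$ through the free-space solution $\varphi_{s_*}$ driven by the Dirac source and compares it, via the image point $x_{s_*}^*=x_{s_*}/|x_{s_*}|^2\notin\overline\Omega$ reflected across the unit circle, with a solution whose singularity sits outside $\Omega$, so the flux is manifestly smooth on $\partial\Omega\times(0,T)$. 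You instead invoke local parabolic boundary regularity (the source is supported at the interior point $s_*$, so near $\partial\Omega$ the equation is homogeneous with zero Dirichlet data); this is shorter, standard, and gives the same conclusion — indeed convergence for every $t\in(0,T)$, strengthening the a.e.\ statement, as you observe. One caution on your optional spectral verification: the claim of ``absolute and uniform convergence'' of the eigenfunction series is too strong. For fixed angular order $m(n)$ and large radial index, Lemma \ref{omega} gives $\omega_n\sim c\,\lambda_n^{1/4}$ while $J_{|m(n)|}(\lambda_n^{1/2}r_*)=O(\lambda_n^{-1/4})$, so the $n$-th term of your series for $g$ is only $O(\lambda_n^{-1/2})$, i.e.\ $O(1/k)$ along the $k$-th zero of a fixed-order Bessel function; the sum over $k$ converges only conditionally, through oscillation of the Bessel phases. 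The super-algebraic turning-point decay you cite holds in the order $m$ (since $r_*<1$), not in $k$, so this route would need an additional summation-by-parts argument before it yields smoothness in $\theta$; your regularity argument is the one to rely on. Finally, your closing remark — that mere $L^2(\partial\Omega)$ regularity plus Carleson's theorem would only give convergence for a.e.\ observation point rather than at the prescribed $z$ — correctly identifies why the smoothness lemma is indispensable to both your proof and the paper's.
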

\begin{proof}
From the definition \eqref{defi-psi} of the function $\eta_z^N$, it follows that
\begin{align*}
\sum_{l=-N}^N \int_{\partial\Omega} \xi_l(z) \xi_{-l}(x)
\frac{\partial u}{\n} (x,t)\ dx
=\sum_{l=-N}^N \langle \frac{\partial u}{\n} (\cdot,t),
\xi_l(\cdot)  \rangle_{L^2(\partial\Omega)}\ \xi_l(z),\quad t\in(0,T).
\end{align*}
Here $\langle\cdot, \cdot \rangle_{L^2(\partial\Omega)}$ is the inner product of $L^2(\partial\Omega)$. 
The smoothness property of $ \frac{\partial u}{\n} (\cdot,t)$, which is proved in Appendix, leads to 
the pointwise convergence of the Fourier series of
$\frac{\partial u}{ \n}(\cdot,t)$. Hence, we have 
$$\lim_{N\to\infty}\sum_{l=-N}^N \langle \frac{\partial u}{\n} (\cdot,t),
\xi_l(\cdot)  \rangle_{L^2(\partial\Omega)}\ \xi_l(z)=\frac{\partial u}{\n} (z,t),
\quad t\in(0,T),$$
which completes the proof.
\end{proof}

Obviously $\eta_z^N\in L^2(\Omega)$, then we can express $\eta_z^N$ in $L^2(\Omega)$ sense as
\begin{equation*}
\label{eq-delN}
\eta_z^N(x) = \sum_{n=1}^\infty \langle \eta_z^N,
\varphi_n\rangle_{L^2(\Omega)} \varphi_n(x).
\end{equation*}
By \eqref{eigenfunction} and Lemma \ref{omega}, the Fourier coefficients $\langle \eta_z^N, \varphi_n \rangle_{L^2(\Omega)}$
are calculated as
\begin{equation}\label{a_n}
\langle \eta_z^N, \varphi_n\rangle_{L^2(\Omega)}=
\begin{cases}
\pi^{-1/2}\lambda_n^{-1/2} e^{-im(n)\theta_z}, & \mbox{if $N\ge|m(n)|$,}
\\
0, & \mbox{otherwise.}
\end{cases}
\end{equation}

We assume $u_z^N$ is the solution of the following initial-boundary value problem
\begin{equation}
\label{eq-uM}
\left\{
\begin{aligned}
(\partial_t-\Delta) u_z^N(x,t) &=0, && (x,t)\in \Omega\times(0,T),\\
u_z^N(x,t) &= 0,&& (x,t)\in\partial\Omega\times (0,T),\\
u_z^N(x,t) &=-\eta_z^N(x), && (x,t)\in\Omega\times\{0\}.
\end{aligned}
\right.
\end{equation}
In view of the fact that $\eta_z^N$ is the linear combination of harmonic functions on $\Omega$, we see that $\Delta \eta_z^N = 0$, then $w_z^N(x,t):=u_z^N(x,t) + \eta_z^N(x)$
satisfies the following initial-boundary value problem
\begin{equation}
\label{eq-adjoint}
\left\{
\begin{alignedat}{2}
(\partial_t-\Delta) w_z^N(x,t) &=0, &\quad& (x,t)\in \Omega\times(0,T),\\
w_z^N(x,t) &= \eta_z^N(x),&\quad& (x,t)\in\partial\Omega\times (0,T),\\
w_z^N(x,t) &= 0, &\quad& (x,t)\in\Omega\times\{0\}.
\end{alignedat}
\right.
\end{equation}

Using \eqref{a_n}, \eqref{eq-uM} and the property of heat equation, the representation of $w_z^N$ in \eqref{eq-adjoint} can be given as 
\begin{equation}\label{w_z^N}
w_z^N(x,t) = \sum_{|m(n)|\le N} \pi^{-1/2}\lambda_n^{-1/2} e^{-im(n)\theta_z}\big(1
-e^{-\lambda_nt} \big)\varphi_n(x),\ (x,t)\in \Omega\times(0,T).
\end{equation}

\section{The proof of Theorem \ref{thm-unique}. }\label{section_unique}
Now we will establish the proof of Theorem \ref{thm-unique}.

\subsection{Representation of the boundary flux data.}

In this subsection we will build a connection between the flux measurements
and the unknowns.
\begin{lemma}\label{lem_measurement_1}
Assume $z\in\partial\Omega$, and let $u$ and $w_z^N$ be the solutions
of \eqref{eq-ibvp} and \eqref{eq-adjoint} respectively, then
$$
- \frac{\partial u}{\n}(z,t)=\lim_{N\to \infty}\int_\Omega F(x) w_z^N(x,t)\ dx, 
\quad  t\in(0,T).
$$
\end{lemma}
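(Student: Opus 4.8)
The plan is to establish the identity by a duality argument that pairs the state equation \eqref{eq-ibvp} for $u$ against the adjoint state $w_z^N$ of \eqref{eq-adjoint}, and then to send $N\to\infty$ using Lemma \ref{lem-appro-delta}. To this end I would fix $t\in(0,T)$ and reverse time in the adjoint state, setting $v(x,s):=w_z^N(x,t-s)$ for $(x,s)\in\Omega\times(0,t)$. Differentiating and using \eqref{eq-adjoint} shows that $v$ solves the backward heat equation $(\partial_s+\Delta)v=0$, with terminal value $v(\cdot,t)=w_z^N(\cdot,0)=0$ in $\Omega$ and boundary value $v=\eta_z^N$ on $\partial\Omega\times(0,t)$, the latter because $w_z^N\equiv\eta_z^N$ on $\partial\Omega$ for every time.

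Next I would multiply $(\partial_s-\Delta)u=F$ by $v$ and integrate over $\Omega\times(0,t)$. Integrating by parts in time and applying Green's second identity in space, the data $u(\cdot,0)=0$, $v(\cdot,t)=0$, $u=0$ on $\partial\Omega$, and the adjoint equation $(\partial_s+\Delta)v=0$ eliminate every term except the spatial boundary contribution, and after the substitution $s\mapsto t-s$ one arrives at the time-integrated relation
\begin{equation*}
-\int_0^t\int_{\partial\Omega}\eta_z^N(x)\frac{\partial u}{\n}(x,s)\,dx\,ds=\int_0^t\int_\Omega F(x)\,w_z^N(x,s)\,dx\,ds.
\end{equation*}
Since both bracketed spatial integrals depend only on the integration variable, differentiating with respect to $t$ removes the outer integrals and yields
\begin{equation*}
-\int_{\partial\Omega}\eta_z^N(x)\frac{\partial u}{\n}(x,t)\,dx=\int_\Omega F(x)\,w_z^N(x,t)\,dx.
\end{equation*}
Passing to the limit $N\to\infty$ and applying Lemma \ref{lem-appro-delta} to the left-hand side then gives the asserted formula.

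The main obstacle is to make this duality rigorous, since $F=\delta_{s_*}$ is a distribution rather than an $L^2(\Omega)$ function, so $u$ is merely a weak solution that is singular at $s_*$ and $\Delta u$ fails to be integrable near the source. I would therefore carry out the pairing entirely in the weak sense, always transferring the Laplacian onto the smooth adjoint state $v=w_z^N$---smooth because $\eta_z^N$ is a finite combination of the harmonic functions $\xi_l$---so that $\int_\Omega F\,v=v(s_*)$ is a legitimate distributional pairing and no spatial derivative of $u$ is ever tested against $v$ in the interior. The spatial boundary integrals, the application of Lemma \ref{lem-appro-delta}, and the interchange of $\lim_{N\to\infty}$ with the evaluation all rest on the smoothness of the flux $\frac{\partial u}{\n}(\cdot,t)$ on $\partial\Omega$; this is precisely the regularity established in the Appendix, on which I would rely.
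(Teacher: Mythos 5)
Your proposal is correct and takes essentially the same route as the paper's own proof: the paper likewise pairs the equation for $u$ against the time-reversed adjoint state (written there as the convolution $I_N(t)=\int_0^t\int_\Omega F(x)\,w_z^N(x,t-\tau)\,dx\,d\tau$), uses Green's identities together with the vanishing initial and boundary data and the adjoint equation to reduce everything to $-\int_0^t\int_{\partial\Omega}\eta_z^N(x)\,\frac{\partial u}{\n}(x,\tau)\,dx\,d\tau$, differentiates in $t$, and passes to the limit via Lemma \ref{lem-appro-delta}. Your time-reversal substitution $v(x,s)=w_z^N(x,t-s)$ and your explicit remark on interpreting the pairing with $F=\delta_{s_*}$ distributionally (always shifting derivatives onto the smooth $w_z^N$) are presentational refinements of the same argument rather than a different method.
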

\begin{proof}
Equation \eqref{eq-adjoint} and Green's identities yield that
for $v\in H_0^1(\Omega)$,
$$
\int_\Omega \partial_t w_z^N(x,t) v(x) + \nabla w_z^N(x,t)\cdot\nabla v(x)\ dx = 0,\quad  t\in(0,T).
$$
On the other hand, taking convolution of $w_z^N(x,t)$ and equation \eqref{eq-ibvp}, we see that
\begin{align*}
I_N(t) :=& \int_0^t\int_\Omega F(x) w_z^N(x,t-\tau)\ dx\ d\tau\\
=& \int_0^t\int_\Omega \big[ \partial_t u(x,\tau)
- \Delta u(x,\tau) \big] w_z^N(x,t-\tau)\ dx\ d\tau.
\end{align*}
With Green's identities, we have
\begin{align*}
I_N(t)  = &\int_0^t\int_\Omega \partial_t u(x,\tau) w_z^N(x,t-\tau)\ dx\ d\tau
+ \int_0^t\int_\Omega \nabla u(x,\tau) \cdot \nabla w_z^N(x,t-\tau)\ dx\ d\tau\\
&- \int_0^t\int_{\partial\Omega} \frac{\partial u}{\n}(x,\tau) w_z^N(x,t-\tau)\ dx\ d\tau.
\end{align*}
With \eqref{w_z^N} and $w_z^N(x,0)=0$, we arrive at the equality
$$
 \int_0^t \int_\Omega \partial_t u(x,\tau) w_z^N(x,t-\tau)
 \ dx\ d\tau
=  \int_0^t \int_\Omega u(x,\tau) \partial_t w_z^N(x,t-\tau)\ dx \ d\tau.
$$
Finally, we get
\begin{align*}
I_N  = &
\int_0^t\int_\Omega \Big[\partial_t w_z^N(x,t-\tau) u(x,\tau)
+ \nabla w_z^N(x,t-\tau)\cdot \nabla u(x,\tau)\Big]\ dx\ d\tau
\\
&- \int_0^t \int_{\partial\Omega} \frac{\partial u}{\n}(x,\tau) \eta_z^N(x)\ dx\ d\tau\\
=&- \int_0^t\int_{\partial\Omega} \frac{\partial u}{\n}(x,\tau) \eta_z^N(x)\ dx\ d\tau.
\end{align*}
For the above result, differentiating on time $t$ gives that 
$$\int_\Omega F(x) w_z^N(x,t)\ dx=- \int_{\partial\Omega} \frac{\partial u}{\n}(x,t) \eta_z^N(x)\ dx,\  t\in(0,T).$$
With Lemma \ref{lem-appro-delta} we have 
$$
 \lim_{N\to \infty}\int_\Omega F(x) w_z^N(x,t)\ dx
=-\frac{\partial u}{\n}(z,t),\  t\in(0,T),
$$
which completes the proof.
\end{proof}

With the above lemma, we can show the result below straightforwardly.
\begin{lemma}
\label{lem-ipfp}
For $z=(\cos \theta_z,\sin\theta_z)\in\partial\Omega$, we have for $ t\in(0,T)$,
\begin{align*}
 \frac{\partial u}{\n}(z,t) &=\sum_{n=1}^\infty a_{n,s_*}e^{-im(n)\theta_z}\big(1-e^{-\lambda_nt}\big)\\
 &=\sum_{n=1}^\infty a_{n,r_*}e^{-im(n)(\theta_z-\theta_*)}\big(1-e^{-\lambda_nt}\big),
\end{align*}
where the coefficients $\{a_{n,r_*},a_{n,s_*}\}$ are given as 
\begin{align*}
a_{n,r_*}&=-\pi^{-1/2}\lambda_n^{-1/2} \omega_nJ_{|m(n)|}(\lambda_n^{1/2}r_*),\\
 a_{n,s_*}&=a_{n,r_*}e^{im(n)\theta_*}.
\end{align*}
\end{lemma}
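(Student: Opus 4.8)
The plan is to specialize the identity of Lemma \ref{lem_measurement_1} to the point source $F=\delta_{s_*}$ and then substitute the explicit spectral expansion \eqref{w_z^N} of $w_z^N$. First I would note that, since $\eta_z^N\in C^\infty(\overline\Omega)$ and the heat evolution preserves this regularity, the function $w_z^N(\cdot,t)$ is continuous (indeed smooth) on $\overline\Omega$ for each fixed $t\in(0,T)$; consequently the pairing of the Dirac mass against $w_z^N$ is legitimate and the sifting property yields
$$\int_\Omega F(x)\,w_z^N(x,t)\ dx=w_z^N(s_*,t),\qquad s_*=(r_*,\theta_*).$$
Inserting this into Lemma \ref{lem_measurement_1} reduces the whole claim to evaluating $\lim_{N\to\infty}w_z^N(s_*,t)$. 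Crucially, the existence of this limit is itself part of the assertion of Lemma \ref{lem_measurement_1}, so no separate convergence argument for the resulting Bessel--Fourier series is needed here.

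Next I would put $x=s_*$ in \eqref{w_z^N} and evaluate the eigenfunctions at the source through \eqref{eigenfunction}, namely $\varphi_n(r_*,\theta_*)=\omega_nJ_{|m(n)|}(\lambda_n^{1/2}r_*)e^{im(n)\theta_*}$. Each summand then becomes
$$\pi^{-1/2}\lambda_n^{-1/2}\,\omega_nJ_{|m(n)|}(\lambda_n^{1/2}r_*)\,e^{-im(n)\theta_z}\,e^{im(n)\theta_*}\big(1-e^{-\lambda_nt}\big),$$
and recognizing the Bessel prefactor as $-a_{n,r_*}$ identifies the partial sum over $|m(n)|\le N$ with a truncation of the target series. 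Passing to the limit and using $-\frac{\partial u}{\n}(z,t)=\lim_{N\to\infty} w_z^N(s_*,t)$ from Lemma \ref{lem_measurement_1}, the two minus signs cancel and I obtain
$$\frac{\partial u}{\n}(z,t)=\sum_{n=1}^\infty a_{n,r_*}\,e^{im(n)\theta_*}\,e^{-im(n)\theta_z}\big(1-e^{-\lambda_nt}\big).$$

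Finally, the two displayed formulas are merely two packagings of this single series: setting $a_{n,s_*}=a_{n,r_*}e^{im(n)\theta_*}$ produces the first line at once, while combining the angular phases via $e^{im(n)\theta_*}e^{-im(n)\theta_z}=e^{-im(n)(\theta_z-\theta_*)}$ produces the second. I do not anticipate a genuine obstacle here, which is why the statement is labelled straightforward; the only step warranting a word of justification is the Dirac pairing $\int_\Omega \delta_{s_*}\,w_z^N\,dx=w_z^N(s_*,t)$, valid precisely because $w_z^N(\cdot,t)$ is smooth, while everything downstream is substitution of \eqref{eigenfunction} and rearrangement of exponentials.
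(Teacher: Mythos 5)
Your proposal is correct and follows essentially the same route as the paper: apply the Dirac sifting property to get $\int_\Omega F(x)\,w_z^N(x,t)\,dx = w_z^N(s_*,t)$, then let $N\to\infty$ in Lemma \ref{lem_measurement_1} and substitute the expansion \eqref{w_z^N} with $\varphi_n(r_*,\theta_*)$ evaluated via \eqref{eigenfunction}, the signs cancelling exactly as you describe. The only difference is that you spell out the justifications (smoothness of $w_z^N(\cdot,t)$ for the Dirac pairing, convergence delegated to Lemma \ref{lem_measurement_1}) that the paper leaves implicit.
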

\begin{proof}
We have
\begin{equation*}
\begin{aligned}
\int_\Omega F(x) w_z^N(x,t)\ dx
&=w_z^N(s_*,t).
\end{aligned}
\end{equation*}
So, letting $N\to \infty$, \eqref{w_z^N} leads to 
\begin{align*}
 \frac{\partial u}{\n}(z,t) &= -\sum_{n=1}^\infty \pi^{-1/2}\lambda_n^{-1/2} \omega_nJ_{|m(n)|}(\lambda_n^{1/2}r_*)e^{im(n)(\theta_* -\theta_z)}\big(1-e^{-\lambda_nt} \big),
\end{align*}
which completes the proof.
\end{proof}

\subsection{Auxiliary results.}
In this subsection, we list several auxiliary results, which will be used in the proof of the uniqueness theorem. 
\begin{lemma}
\label{lemma_uniqueness_*}
Define $z_\ell:=(\cos{\theta_\ell},\sin{\theta_\ell})\in \partial\Omega, \ \ell=1,2$ satisfying condition \eqref{condi-z}.
Then $ a_{n,r_*}=a_{n,s_*}=0,\ n\in\mathbb{N}^+,$ provided that
$$
\sum_{\lambda_n=\lambda_j} a_{n,r_*}e^{-im(n)(\theta_\ell-\theta_*)}=0, \ j\in\mathbb{N}^+,\ \ell=1,2.
$$
\end{lemma}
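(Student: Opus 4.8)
The plan is to exploit the multiplicity structure of the Dirichlet eigensystem recorded in the Remark, according to which each eigenvalue has multiplicity at most two. Fix $j\in\mathbb{N}^+$. The index set $\{n:\lambda_n=\lambda_j\}$ is then either a single index $n$ with $m(n)=0$, or a pair $\{n_0,n_0+1\}$ with $m(n_0)=M>0$ and $m(n_0+1)=-M$. I would treat these two cases separately and show in each that the hypothesis forces the corresponding coefficients $a_{n,r_*}$ to vanish.

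In the first case the factor $e^{-im(n)(\theta_\ell-\theta_*)}$ is identically $1$, so the constraint reduces immediately to $a_{n,r_*}=0$, and there is nothing more to do. The substantive case is the multiplicity-two pair. The first step is to note that $a_{n_0,r_*}=a_{n_0+1,r_*}$: by the closed form of the coefficients together with Lemma \ref{omega}, the quantity $a_{n,r_*}$ depends on $n$ only through $\lambda_n$ and $|m(n)|$, and these coincide for $n_0$ and $n_0+1$. Writing $a$ for this common value, the two hypotheses (one for each $\ell$) collapse to
\begin{equation*}
a\big(e^{-iM(\theta_\ell-\theta_*)}+e^{iM(\theta_\ell-\theta_*)}\big)=2a\cos\!\big(M(\theta_\ell-\theta_*)\big)=0,\quad \ell=1,2.
\end{equation*}

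The crux is to rule out $a\neq 0$, and this is the step I expect to be the main obstacle, since it is precisely where the Diophantine condition \eqref{condi-z} must be used to distinguish the two observation directions. If $a\neq 0$, then $\cos(M(\theta_1-\theta_*))=\cos(M(\theta_2-\theta_*))=0$, so each of $M(\theta_1-\theta_*)$ and $M(\theta_2-\theta_*)$ is an odd multiple of $\pi/2$. Subtracting the two relations yields $M(\theta_1-\theta_2)\in\pi\mathbb{Z}$, hence $\theta_1-\theta_2\in\pi\mathbb{Q}$, contradicting the assumption \eqref{condi-z}. Therefore $a=0$, that is $a_{n_0,r_*}=a_{n_0+1,r_*}=0$.

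Combining the two cases gives $a_{n,r_*}=0$ for every $n\in\mathbb{N}^+$. Finally, since $a_{n,s_*}=a_{n,r_*}e^{im(n)\theta_*}$, the vanishing of $a_{n,r_*}$ immediately forces $a_{n,s_*}=0$ as well, which completes the argument.
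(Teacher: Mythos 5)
Your proof is correct, and the case split (multiplicity one via $m(n)=0$ versus the conjugate pair $n_0$, $n_0+1$) matches the paper's; but in the key multiplicity-two case you take a genuinely different route. The paper treats $a_{n(j),r_*}$ and $a_{n(j)+1,r_*}$ as two \emph{independent} unknowns of the homogeneous $2\times 2$ system coming from the two observation points, and concludes by computing the determinant $e^{i|m|(\theta_1-\theta_2)}-e^{-i|m|(\theta_1-\theta_2)}=2i\sin\big(|m|(\theta_1-\theta_2)\big)\ne 0$ under \eqref{condi-z}; no relation between the two coefficients is ever used. You instead exploit the structural identity $a_{n_0,r_*}=a_{n_0+1,r_*}$ --- which is indeed justified, since by Lemma \ref{omega} and the formulas in Lemma \ref{lem-ipfp} the coefficient $a_{n,r_*}$ depends on $n$ only through $(\lambda_n,|m(n)|)$, and these coincide across the pair --- to collapse the system to the scalar equations $2a\cos\big(M(\theta_\ell-\theta_*)\big)=0$, $\ell=1,2$, and then rule out $a\ne 0$ by the same rationality obstruction ($M(\theta_1-\theta_2)\in\pi\mathbb Z$ would force $\theta_1-\theta_2\in\pi\mathbb Q$). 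Your argument is more elementary, but the paper's is strictly more general, and this matters downstream: in the proof of Theorem \ref{thm-unique} the lemma is actually invoked for the \emph{differences} $a_{n,s_*}-a_{n,\tilde s_*}=a_{n,r_*}e^{im(n)\theta_*}-a_{n,\tilde r_*}e^{im(n)\tilde\theta_*}$, which are in general not equal across the conjugate pair (they are complex conjugates of each other, not copies), so your symmetry-based reduction does not apply verbatim there; the determinant argument covers arbitrary coefficients and hence that application with no modification. Your approach could be patched for the application by using conjugate symmetry (writing the pair's coefficient as $\rho e^{i\phi}$ and arguing $\cos(\phi-M\theta_\ell)=0$ for $\ell=1,2$), but as written it establishes only the lemma's literal statement.
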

\begin{proof}
 Given $j\in \mathbb{N}^+$, if $m(n(j))\ne 0$,
 letting $n(j),\ n(j)+1$ be the integers such that
 $\lambda_n=\lambda_j$, then
 \begin{equation*}
  \sum_{\lambda_n=\lambda_j} a_{n,r_*}e^{-im(n)(\theta_\ell-\theta_*)}
  =e^{i|m|(\theta_\ell-\theta_*)}a_{n(j),r_*}+
  e^{-i|m|(\theta_\ell-\theta_*)}a_{n(j)+1,r_*}=0,\quad \ell=1,2,
 \end{equation*}
which gives
\begin{equation*}
 \begin{bmatrix}
  e^{i|m|(\theta_1-\theta_*)}& e^{-i|m|(\theta_1-\theta_*)}\\
  e^{i|m|(\theta_2-\theta_*)}& e^{-i|m|(\theta_2-\theta_*)}
 \end{bmatrix}
  \begin{bmatrix}
  a_{n(j),r_*}\\a_{n(j)+1,r_*}
 \end{bmatrix}
 =\begin{bmatrix}
   0\\0
  \end{bmatrix}.
\end{equation*}
The determinant of the matrix is
\begin{equation*}
 e^{i|m|(\theta_1-\theta_2)}-e^{-i|m|(\theta_1-\theta_2)}
 =2i\sin{(|m|(\theta_1-\theta_2))}\ne 0,
\end{equation*}
by condition \eqref{condi-z} and $m\ne 0$.
Hence, we have $  a_{n(j),r_*}=a_{n(j)+1,r_*}=0$.

In the case of $m(n(j))=0$, it holds that
 \begin{equation*}
  \sum_{\lambda_n=\lambda_j} a_{n,s_*}e^{-im(n)\theta_\ell}
  =a_{n,r_*}=0,\quad \ell=1,2,
 \end{equation*}
sequentially $a_{n,r_*}=0$. Also, recalling that $ a_{n,s_*}=a_{n,r_*}e^{im(n)\theta_*}$, we have $a_{n,s_*}=0$. Since $j$ is chosen arbitrarily, the proof is complete.
\end{proof}

The proof of the next lemma follows from \cite[Lemma 3.5]{RundellZhang:2020}.
\begin{lemma}\label{lemma_uniqueness_Dirichlet}
 Let $\{\tau_n:\N+\}$ be an absolutely convergent complex sequence and
 $\{\gamma_n:\N+\}$ be a real sequence satisfying
 $0\le \gamma_1<\gamma_2<\cdots,\ \gamma_n\to \infty.$
For the complex series $\sum_{n=1}^\infty \tau_n e^{-\gamma_n t}$ which
is defined on $\mathbb{C}^+$, if the set of its zeros on $\mathbb{C}^+$
has an accumulation point, then $\tau_n=0,\ \N+$.
\end{lemma}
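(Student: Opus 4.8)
The statement is a classical uniqueness principle for generalized Dirichlet series, and the plan is to combine complex-analytic rigidity (the identity theorem) with an elementary asymptotic extraction of the coefficients. First I would record the regularity of the sum. Because the sequence $\{\tau_n\}$ is absolutely summable and $|e^{-\gamma_n t}| = e^{-\gamma_n \re t} \le 1$ whenever $\re t \ge 0$, the majorant $\sum_n |\tau_n|$ dominates the series uniformly on the closed right half-plane. Consequently $f(t) := \sum_{n=1}^\infty \tau_n e^{-\gamma_n t}$ converges uniformly there, is continuous on $\overline{\mathbb{C}^+}$, and---being a locally uniform limit of the entire partial sums $\sum_{n\le N}\tau_n e^{-\gamma_n t}$---is holomorphic on the open half-plane $\mathbb{C}^+$ by Weierstrass's theorem on locally uniform limits of holomorphic functions.

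Next I would invoke the identity theorem. The domain $\mathbb{C}^+$ is connected and open, $f$ is holomorphic on it, and by hypothesis the zero set of $f$ has an accumulation point lying in $\mathbb{C}^+$. The identity theorem then forces $f \equiv 0$ throughout $\mathbb{C}^+$, and by continuity $f$ vanishes on the bounding line as well. In particular $f(t) = 0$ for every real $t > 0$, which is all that the subsequent argument needs.

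With $f$ identically zero, I would recover the coefficients one at a time by an asymptotic argument on the positive real axis, proceeding by induction on $k$. Suppose $\tau_1 = \cdots = \tau_{k-1} = 0$ (the base case $k=1$ being vacuous). Then for real $t>0$,
$$e^{\gamma_k t} f(t) = \tau_k + \sum_{n > k} \tau_n e^{-(\gamma_n - \gamma_k)t}.$$
Since $\gamma_n - \gamma_k \ge \gamma_{k+1} - \gamma_k > 0$ for all $n>k$, the tail is bounded in modulus by $e^{-(\gamma_{k+1}-\gamma_k)t}\sum_{n>k}|\tau_n|$, which tends to $0$ as $t\to\infty$. Because the left-hand side is identically zero, letting $t\to\infty$ yields $\tau_k = 0$, completing the induction and hence proving that $\tau_n = 0$ for all $n\in\mathbb{N}^+$.

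The steps are individually routine; the only point requiring genuine care is the passage to the limit in the tail, where the strict monotonicity $\gamma_k < \gamma_{k+1}$ and the absolute summability of $\{\tau_n\}$ are both essential to produce a single uniform exponential decay estimate. This is the step I expect to be the crux, since without the spectral gap the interchange of limit and summation could fail and the clean extraction of $\tau_k$ would break down.
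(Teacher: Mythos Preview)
Your argument is correct and complete. The paper does not supply its own proof of this lemma; it simply cites \cite[Lemma~3.5]{RundellZhang:2020} and moves on. What you have written is precisely the standard route one expects behind that citation: uniform absolute convergence on $\overline{\mathbb{C}^+}$ gives holomorphy of $f$ on $\mathbb{C}^+$, the identity theorem forces $f\equiv 0$, and then the strict gaps $\gamma_{k+1}-\gamma_k>0$ together with $\{\tau_n\}\in\ell^1$ let you peel off the coefficients inductively via $e^{\gamma_k t}f(t)\to\tau_k$ as $t\to\infty$. There is nothing to add or to compare.
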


\subsection{Proof of Theorem \ref{thm-unique}.}
Now we are ready to prove the main theorem.
\begin{proof}[Proof of Theorem \ref{thm-unique}]
To show the uniqueness theorem, firstly, for $\ell=1,2,\ t\in(0,T)$ we have
\begin{equation*}
 \sum_{n=1}^\infty a_{n,s_*}e^{-im(n)\theta_\ell}\big(1-e^{-\lambda_nt}\big)
 = \sum_{n=1}^\infty a_{n,\tilde s_*}e^{-im(n)\theta_\ell}\big(1-e^{-\lambda_nt}\big).
\end{equation*}
Then by Lemma \ref{lemma_uniqueness_Dirichlet}, we have that
\begin{equation*}
\sum_{\lambda_n=\lambda_j} a_{n,s_*}e^{-im(n)\theta_\ell}=\sum_{\lambda_n=\lambda_j} a_{n,\tilde s_*}e^{-im(n)\theta_\ell},\ \ell=1,2,\ j\in\mathbb N^+.
\end{equation*}
Next, with Lemma \ref{lemma_uniqueness_*}, it yields that
\begin{equation*}
  a_{n,s_*}= a_{n,\tilde s_*},\ \N+,
\end{equation*}
which leads to for $\N+,$
\begin{equation}\label{result}
  J_{|m(n)|}(\lambda_n^{1/2}r_*)e^{im(n)\theta_*}=J_{|m(n)|}(\lambda_n^{1/2}\tilde r_*)e^{im(n)\tilde\theta_*}.
\end{equation}
The above result gives that 
\begin{equation*}
|J_{|m(n)|}(\lambda_n^{1/2}r_*)|=|J_{|m(n)|}(\lambda_n^{1/2}\tilde r_*)|\ \text{for}\ \N+.
\end{equation*}

Recall that the first eigenvalue $\lambda_1$ is the square of the first positive root of the Bessel function $J_0(\cdot)$. Also, from the straightforward computation, we have
\begin{equation*}
 [J_0(x)]'=-J_1(x),\quad J_1(x)>0\ \text{on}\ (0,\lambda_1^{1/2}).
\end{equation*}
The above results give that $J_0$ is strictly positive and decreasing on $(0,\lambda_1^{1/2})$, which together with  $|J_0(\lambda_1^{1/2}r_*)|=|J_0(\lambda_1^{1/2}\tilde r_*)|$ gives
$$r_*=\tilde r_*,\ J_0(\lambda_1^{1/2}r_*)=J_0(\lambda_1^{1/2}\tilde r_*).$$
Inserting the above result into \eqref{result} yield that $\theta_*=\tilde\theta_*$. The proof is complete.
\end{proof}

\section{Numerical algorithms.}\label{section_numerical}
To better accommodate natural boundary conditions, we reformulate  model \eqref{eq-ibvp} in a circular domain and provide its equivalent form in polar coordinates.

The equivalent form of model \eqref{eq-ibvp} in polar coordinates is given by the parabolic equation:
\begin{equation}\label{41}
\partial_t u-\left(\frac{1}{r} \frac{\partial}{\partial r}\left(r \frac{\partial u}{\partial r}\right)+\frac{1}{r^2} \frac{\partial^2 u}{\partial \theta^2}\right)=F(r, \theta),
\end{equation}
where the source term $F(r, \theta)$ is defined as:
$$
F(r, \theta)=\frac{\delta_{\left(r-r_*, \theta-\theta_*\right)}}{r}.
$$
The boundary and initial conditions are:
$$
u(r, \theta, t)=0, \quad(r, \theta, t) \in \Omega \times\{0\} \cup \partial \Omega \times(0, T) .
$$
This formulation aligns with the geometry of circular domains, improving the accuracy of boundary condition enforcement.

\subsection{Forward solver. }
We divide the region $\Omega$ into  non overlapping quadrilateral domains $\Omega_h$ and use quadrilateral finite elements in the spatial direction. For the $i$-th subdomain $\Omega_{h_i}:\left[r_i, r_{i+1}\right] \times$ $\left[\theta_i, \theta_{i+1}\right]$, the coordinate transformations $\xi=2 \cdot\left(r-r_i\right) /\left(r_{i+1}-r_i\right)-1$ and $\eta=2 \cdot\left(\theta-\theta_i\right) /\left(\theta_{i+1}-\theta_i\right)-1$ convert the subdomain to the computational domain $[-1,1] \times[-1,1]$.  In this reference element, the basis functions $N_i(\xi, \eta)$ are linear and are associated with the nodes $(\xi, \eta)$ located at $(-1,-1),(1,-1),(1,1)$, and $(-1,1)$. The basis functions for the quadrilateral element are:
$$
\begin{aligned}
& N_1(\xi, \eta)=\frac{1}{4}(1-\xi)(1-\eta), \\
& N_2(\xi, \eta)=\frac{1}{4}(1+\xi)(1-\eta), \\
& N_3(\xi, \eta)=\frac{1}{4}(1+\xi)(1+\eta), \\
& N_4(\xi, \eta)=\frac{1}{4}(1-\xi)(1+\eta).
\end{aligned}
$$
For a given quadrilateral element $\left[r_i, r_{i+1}\right] \times\left[\theta_i, \theta_{i+1}\right]$, using isoparametric mapping, we can define the relationship between the physical coordinates $(r, \theta)$ and the reference coordinates $(\xi, \eta)$ as follows:
$$
r(\xi, \eta)=\sum_{i=1}^4 N_i(\xi, \eta) r_i, \quad \theta(\xi, \eta)=\sum_{i=1}^4 N_i(\xi, \eta) \theta_i
$$
where $r_i$ and $\theta_i$ are the radial and angular coordinates of the four nodes of the physical element, respectively.
Apply an appropriate time-stepping backward Euler scheme  to discretize the time derivative $\partial_t u$, e.g.
$\frac{\partial u}{\partial t} \approx \frac{u^{n+1}-u^n}{\Delta t}
$.
The weak form of the parabolic equation is obtained by multiplying the equation \eqref{41} by a test function $v \in V=H_0^1(\Omega)$
 and integrating over the domain
\begin{equation}
\int_{\Omega}\left(\frac{u^{n+1}-u^n}{\Delta t} v+\left(\frac{1}{r} \frac{\partial u^{n+1}}{\partial r} \frac{\partial v}{\partial r}+\frac{1}{r^2} \frac{\partial^2 u^{n+1}}{\partial \theta^2} v\right)\right) r \ d r\ d \theta=\int_{\Omega}\frac{\delta_{\left(r-r_1, \theta-\theta_1\right)}}{r} v r\ d r\ d \theta.
\end{equation}
Given the properties of the Dirac delta function, the integral on the right-hand side simplifies to:
\begin{equation}
\int_{\Omega} \frac{\delta_{\left(r-r_1, \theta-\theta_1\right)}}{r} v r\ d r\ d \theta=v\left(r_1, \theta_1\right).
\end{equation}
Therefore, the weak form becomes:
\begin{equation}\label{44}
\int_{\Omega}\left(\frac{u^{n+1}-u^n}{\Delta t} v+\left(\frac{1}{r} \frac{\partial u^{n+1}}{\partial r} \frac{\partial v}{\partial r}+\frac{1}{r^2} \frac{\partial^2 u^{n+1}}{\partial \theta^2} v\right)\right) r\ d r\ d \theta=v(r_1, \theta_1).
\end{equation}

The finite element space $V_h$ are defined by $V_h= span\{N1,N2,N3,N4\}.$
Then, the fully discrete finite element approximation scheme of \eqref{44} is to find $u_h\in V_h$,
\begin{equation}\label{45}
\int_{\Omega}\left(\frac{u_h^{n+1}-u_h^n}{\Delta t} v_h+\left(\frac{1}{r} \frac{\partial u_h^{n+1}}{\partial r} \frac{\partial v_h}{\partial r}+\frac{1}{r^2} \frac{\partial^2 u_h^{n+1}}{\partial \theta^2} v_h\right)\right) r \ d r\ d \theta=v_h(r_1, \theta_1),\ \forall v_h\in V_h .
\end{equation}

\begin{remark}
In solving the parabolic point source problem using the finite element method in a circular domain, aside from the Dirichlet boundary conditions specified in the problem, it is important to address an additional hidden periodic boundary condition. This condition arises at $\theta=0$ and $\theta=2\pi$,  where the values must match to maintain periodicity.
\end{remark}

\subsection{Inverse solver.}
We now develop methods to reconstruct a  Dirac point source. The methods we propose
are applied in a subdomain bounded in space by our observations.
The algorithm for the least-squares problem is summarized as follows. We reformulate the inverse
problem into equivalent minimization problems: a least-squares problem.

The least-squares approach consists in finding the minimum of the objective function $J$
defined as
\begin{equation}\label{46}
\begin{split}
&J(r_1,\theta_1)= \sum_{j=1}^{2}\int_0^T\left(\frac{\partial u_h}{\partial \overrightarrow{\mathbf{n}}}(z_j, t)-f_j^{\delta}\right)^2\ dt, ~t \in(0, T),~ z_1, z_2 \in Z_{o b} \subset \partial \Omega.
\end{split}
\end{equation}

\begin{Algorithm}\label{A41}

 Finite element method and gradient descent method for inversion of a Dirac point source problem.
\hrule
\begin{description}
  \item[1] Solving the forward problem \eqref{45} to produce $f_j^{\delta}=\frac{\partial u_h(z_j,t_i)}{\partial \overrightarrow{\mathbf{n}}}*(1+rand(2,n)*\delta)$, where $j=1,2$, $i=1,2,...,n$, $\delta$ is noise level.

       \item[2] To solve the least squares problem \eqref{46}, we use the gradient descent method.
       \begin{enumerate}
  \item Giving initial values $r_0$ and $\theta_0$.

 \item While $\frac{\|r_k-r_{k-1}\|}{\|r_k\|}+\frac{\|\theta_k-\theta_{k-1}\|}{\|\theta_k\|}<\epsilon$, where $\epsilon$ is the threshold

  Computering $r_k=r_{k-1}-\alpha\frac{\partial J(r_{k-1},\theta_{k-1})}{\partial r_{k-1}}$ and $\theta_k=\theta_{k-1}-\alpha\frac{\partial J(r_{k-1},\theta_{k-1})}{\partial \theta_{k-1}}$.
      \item $r_1=r_k,~~~~\theta_1=\theta_k.$
 \end{enumerate}
\end{description}
\hrule
\end{Algorithm}

\begin{remark}\label{R41}
To determine the step sizes $\alpha$ in step 2.2, we use a linear search. The derivatives of $J$ with respect to $r_{k-1}$ and $\theta_{k-1}$
are given by:
$$\frac{\partial J(r_{k-1},\theta_{k-1})}{\partial r_{k-1}}=2\left(\frac{T}{n}\sum_{j=1}^{2}\sum_{i=1}^{n}F_{ij}\left( \frac{\partial }{\partial \overrightarrow{\mathbf{n}}}\frac{\partial u_h(z_j, t_i,r_{k-1},\theta_{k-1})}{\partial r_{k-1}}\right)\right ),$$
and
$$\frac{\partial J(r_{k-1},\theta_{k-1})}{\partial \theta_{k-1}}=2\left(\frac{T}{n}\sum_{j=1}^{2}\sum_{i=1}^{n}F_{ij}\left( \frac{\partial }{\partial \overrightarrow{\mathbf{n}}}\frac{\partial u_h(z_j, t_i,r_{k-1},\theta_{k-1})}{\partial \theta_{k-1}}\right)\right ), $$where $F_{ij}=\frac{\partial u_h(z_j, t_i,r_{k-1},\theta_{k-1})}{\partial \overrightarrow{\mathbf{n}}}-f_j^{\delta}$.

For circular domains, the boundary derivatives can be computed as follows  $$\frac{\partial u_h(r_{k-1},\theta_{k-1})}{\partial \overrightarrow{\mathbf{n}}}=\frac{\partial u_h(r_{k-1},\theta_{k-1})}{\partial r},$$  where the radial derivative can be approximated using finite element interpolation: $$\frac{\partial v}{\partial r}=\sum_{i=1}^4 \frac{\partial N_i(\xi, \eta)}{\partial r}  v_i,$$ with $v_i$ being the degree of freedom at each node of the element.

To calculate
$\frac{\partial u_h(r_{k-1},\theta_{k-1})}{\partial r_{k-1}}$ and $\frac{\partial u_h(r_{k-1},\theta_{k-1})}{\partial \theta_{k-1}}$, we take the computation of
$\frac{\partial u_h(r_{k-1},\theta_{k-1})}{\partial r_{k-1}}$ as an example.
This involves solving the following equation:
\[
\int_\Omega \left( \frac{z_h^{n+1} - z_h^n}{\Delta t} v_h + \left( \frac{1}{r} \frac{\partial z_h^{n+1}}{\partial r} \frac{\partial v_h}{\partial r} + \frac{1}{r^2} \frac{\partial^2 z_h^{n+1}}{\partial \theta^2} v_h \right) \right) r\ dr\ d\theta = \frac{\partial u_h(r_{k-1}, \theta_{k-1})}{\partial r}, 
\]
where  $\forall v_h \in V_h$, \( z_h = \frac{\partial u_h}{\partial r_{k-1}} \).
\end{remark}

\section{Numerical experiments. }\label{section_example}
In this section, we present a numerical example  to demonstrate the theoretical analysis.
\begin{example}\label{exam1}
The parameter
$m$ represents the number of subdivisions for the radius $r \in[0,1]$, $n$ denotes the number of subdivisions for the angle $\theta \in[0,2 \pi]$,
and $d$ is the number of subdivisions for the time interval $[0,T]$ with $T=1$.  To more clearly visualize the position of the point$'$s coordinates, we convert the polar coordinates into Cartesian coordinates as follows: $x_1=r_1cos(\theta_1)$, $y_1=r_1sin(\theta_1)$.
\end{example}

First, we set $m=n=70$ and  $d=500$ to solve the  forward problem \eqref{45}, thereby generating the data $f_j^{\delta}$. Figure \ref{F1} provides the  side view and vertical view of  the numerical solution $u_h$ for this forward problem.
\begin{figure}
\centering
\subfigure[Side View ]{
\includegraphics[width=6cm]{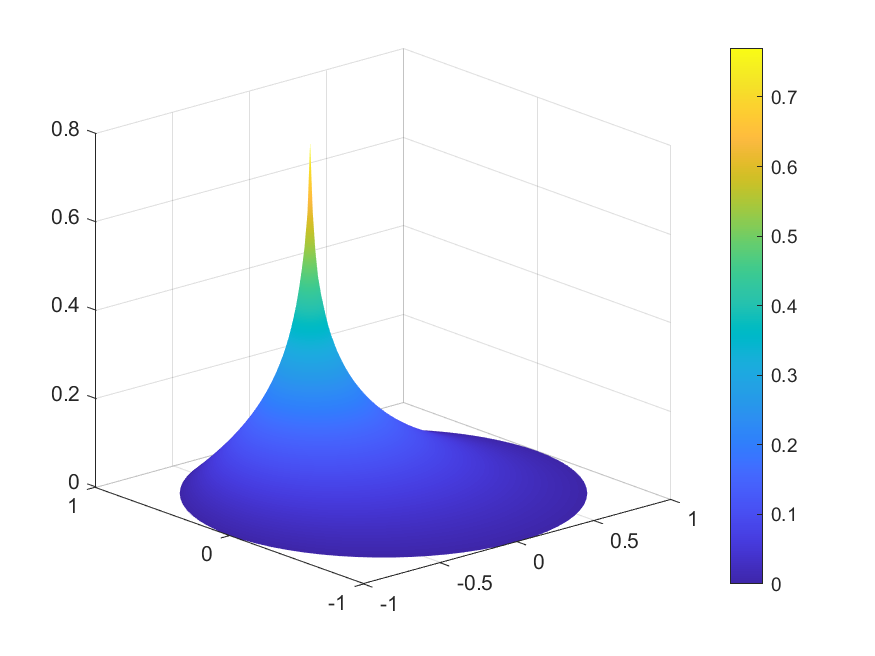}
}
\subfigure[Vertical view]{
\includegraphics[width=6cm]{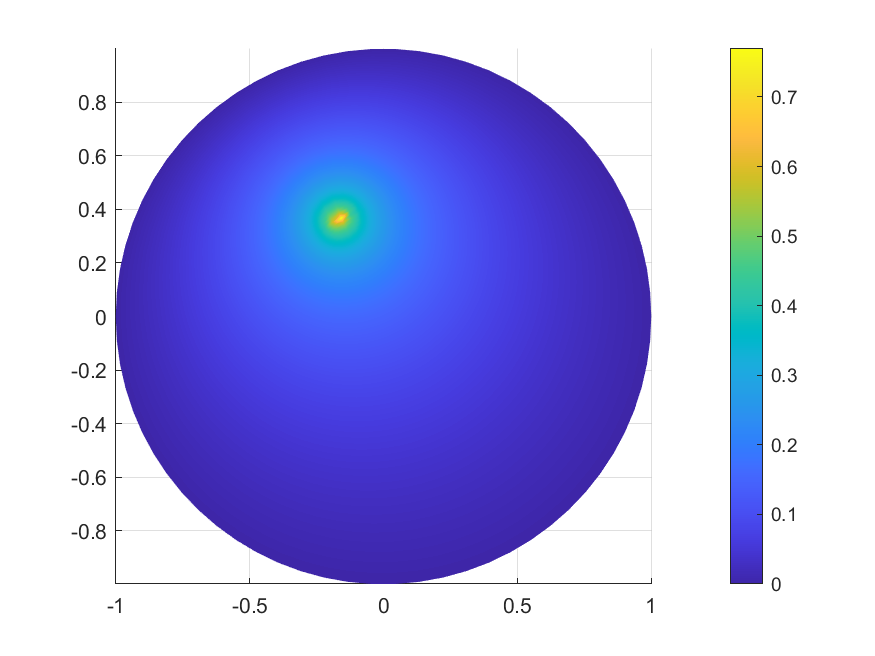}
}
\caption{The numerical solution $u_h$ of forward problem with $m=n=70$.}\label{F1}
\end{figure}

Following Algorithm \ref{A41} and the discussion in Remark \ref{R41},  we then take $m=n=60$ and $d=500$ to solve the inverse problem \eqref{eq-ibvp} using  the least-squares approach defined by equation \eqref{46}.
As shown in Figure \ref{F2}, when the noise level $\delta=3 \%,5 \%,10 \%$, our algorithm effectively estimates the point close to its actual position, demonstrating its robustness. The numerical results presented in Table  \ref{T1} further  validates this, showing that while the relative error increases with the noise level, the algorithm maintains a high level of accuracy and stability.

\begin{figure}
\centering
\subfigure[$3 \%$ random noise]{
\includegraphics[width=4.5cm]{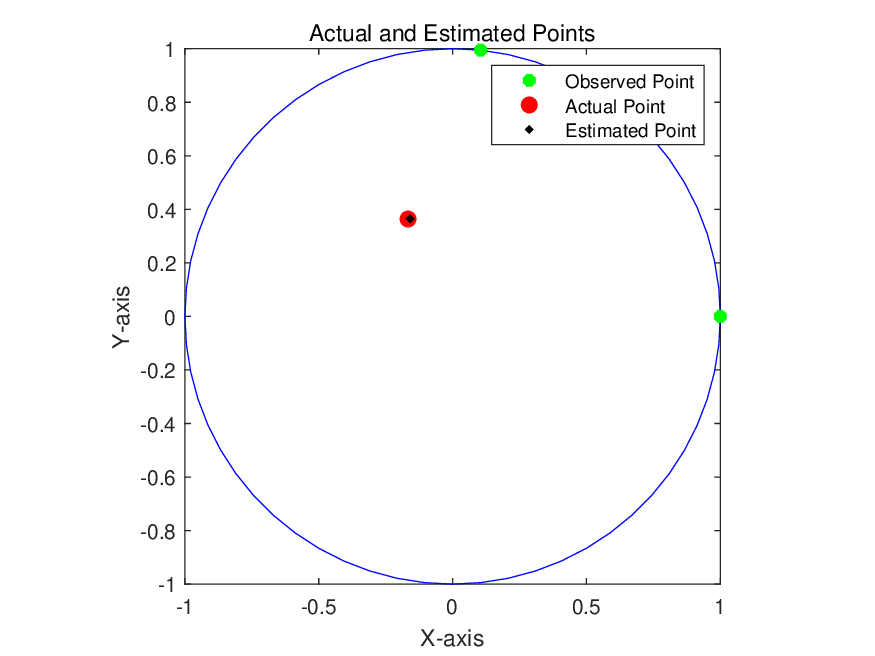}
}
\subfigure[$5 \%$ random noise]{
\includegraphics[width=4.5cm]{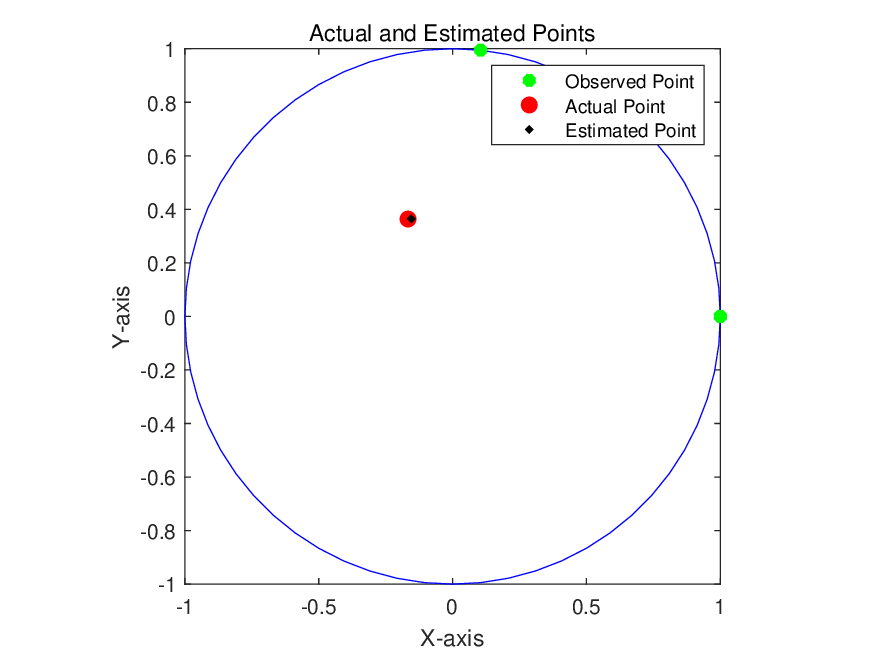}
}
\subfigure[$10 \%$ random noise]{
\includegraphics[width=4.5cm]{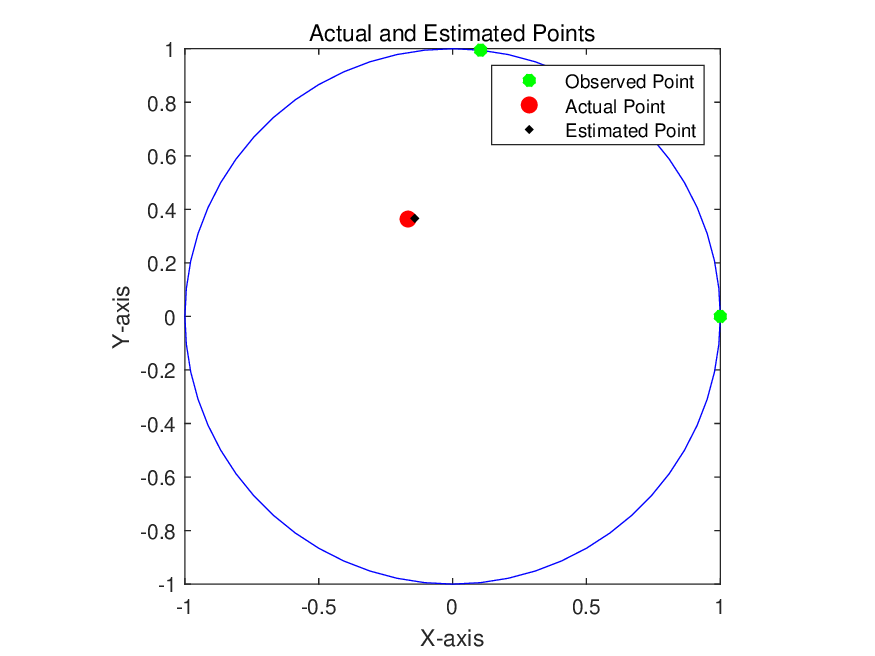}
}
\caption{ Observation, Actual and Estimated Points  \eqref{46}.}\label{F2}
\end{figure}

\begin{table}
\centering
\caption{Inverse computation of $r_1$ and $\theta_1$ with $3\%$, $5\%$, $10\%$ random noise \eqref{46}.}
\label{T1}
\begin{tabular}{ccccc}
\hline
           & $r_1$            & $\theta_1$           & $x_1$             & $y_1$             \\
\hline
Actual     & $0.4$            & $2.0$              & $-0.1665$         & $0.3637$          \\
\hline
Estimated ($\delta=3\%$)  & $0.3976$         & $1.9822$          & $-0.1590$         & $0.3644$          \\
Error       & $2.40 \times 10^{-3}$ & $1.78 \times 10^{-2}$  & $7.5 \times 10^{-3}$ & $7.05 \times 10^{-4}$ \\
\hline
Estimated ($\delta=5\%$)  & $0.3960$         & $1.9706$          & $-0.1541$         & $0.3648$          \\
Error       & $4.00 \times 10^{-3}$ & $2.94 \times 10^{-2}$  & $1.23 \times 10^{-2}$ & $1.05 \times 10^{-3}$ \\
\hline
Estimated ($\delta=10\%$) & $0.3926$         & $1.9390$          & $-0.1413$         & $0.3663$          \\
Error       & $7.40 \times 10^{-3}$ & $6.10 \times 10^{-2}$  & $2.52 \times 10^{-2}$ & $2.57 \times 10^{-3}$ \\
\hline
\end{tabular}
\end{table}

\begin{example}\label{exam3}\
In this example, we consider the observed points that are located away from the actual point by equation \eqref{46}.
\end{example}
As illustrated in Figure \ref{yF1}, for noise levels $\delta=3 \%,5 \%,10 \%$, our algorithm accurately estimates the point close to its true position, demonstrating its robustness. The observed points that are located away from the actual point are taken into consideration. The results shown in Table \ref{yT1} confirms this, revealing that even though the relative error grows with the noise level, the algorithm consistently maintains high accuracy and stability.

\begin{figure}
\centering
\subfigure[$3 \%$ random noise]{
\includegraphics[width=4.5cm]{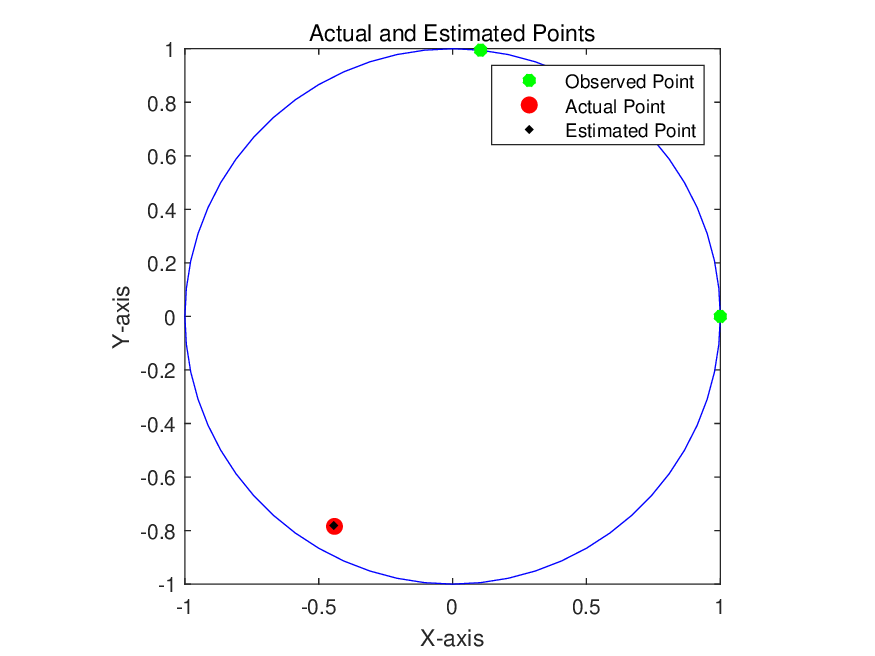}
}
\subfigure[$5 \%$ random noise]{
\includegraphics[width=4.5cm]{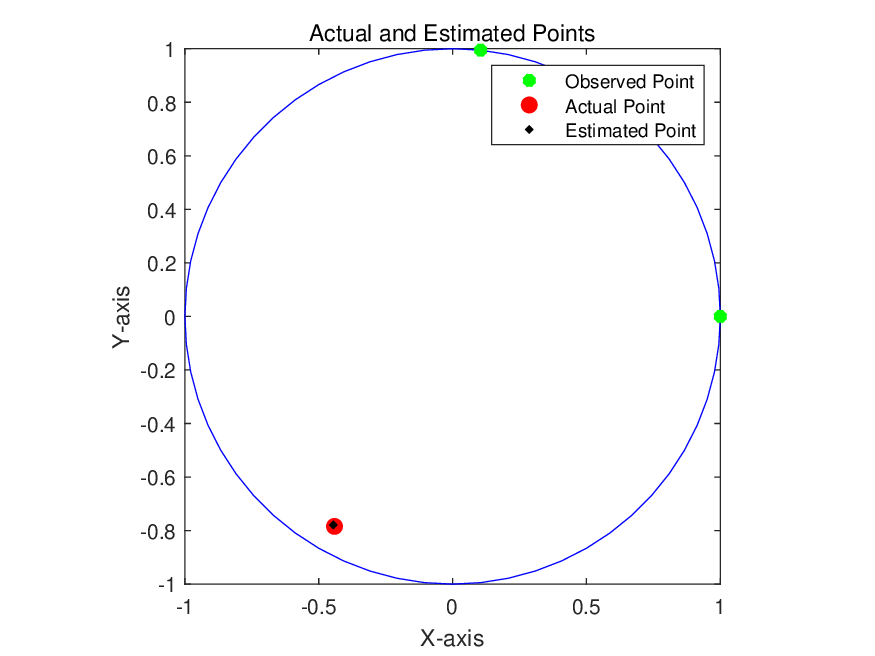}
}
\subfigure[$10 \%$ random noise]{
\includegraphics[width=4.5cm]{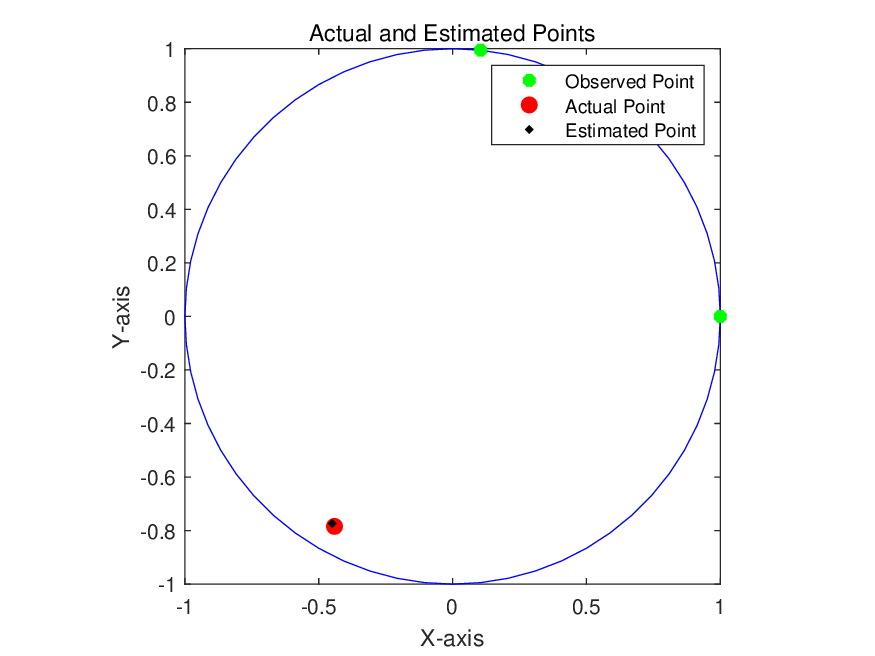}
}
\caption{ Observation, Actual and Estimated Points  \eqref{46}.}\label{yF1}
\end{figure}

\begin{table}
\centering
\caption{Inverse computation of $r_1$ and $\theta_1$ with $3\%$, $5\%$, $10\%$ random noise \eqref{46}.}
\label{yT1}
\begin{tabular}{ccccc}
\hline
           & $r_1$            & $\theta_1$           & $x_1$             & $y_1$             \\
\hline
Actual     & $0.9$            & $4.2$              & $-0.4412$         & $-0.7844$         \\
\hline
Estimated ($\delta=3\%$)  & $0.8984$        & $4.1956$          & $-0.4439$         & $-0.7811$         \\
Error       & $1.60 \times 10^{-3}$ & $4.40 \times 10^{-3}$             & $2.7 \times 10^{-3}$ & $3.3\times 10^{-3}$ \\
\hline
Estimated ($\delta=5\%$)  & $0.8974$        & $4.1925$          & $-0.4458$         & $-0.7788$         \\
Error       & $2.6 \times 10^{-3}$ & $7.50 \times 10^{-3}$ & $4.6 \times 10^{-3}$ & $5.6 \times 10^{-3}$ \\
\hline
Estimated ($\delta=10\%$) & $0.8948$        & $4.1857$          & $-0.4498$         & $-0.7735$         \\
Error       & $5.2 \times 10^{-3}$ & $1.43 \times 10^{-2}$ & $8.6 \times 10^{-3}$ & $1.09 \times 10^{-2}$ \\
\hline
\end{tabular}
\end{table}

\begin{example}\label{exam10}\
In this example, we consider the observed points that are located close to the actual point by equation \eqref{46}.
\end{example}
Figure \ref{j1s1} demonstrates that with noise levels $\delta=3 \%,5 \%,10 \%$,
 the algorithm is able to estimate the point close to its actual position, showing its resilience. The observed points, which are located close to the actual point, are also considered in this process. The data in Table \ref{Tj1s1} further support this observation, indicating that although the relative error increases with the noise level, the algorithm's accuracy and stability remain at a high level.

\begin{figure}
\centering
\subfigure[$3 \%$ random noise]{
\includegraphics[width=4.5cm]{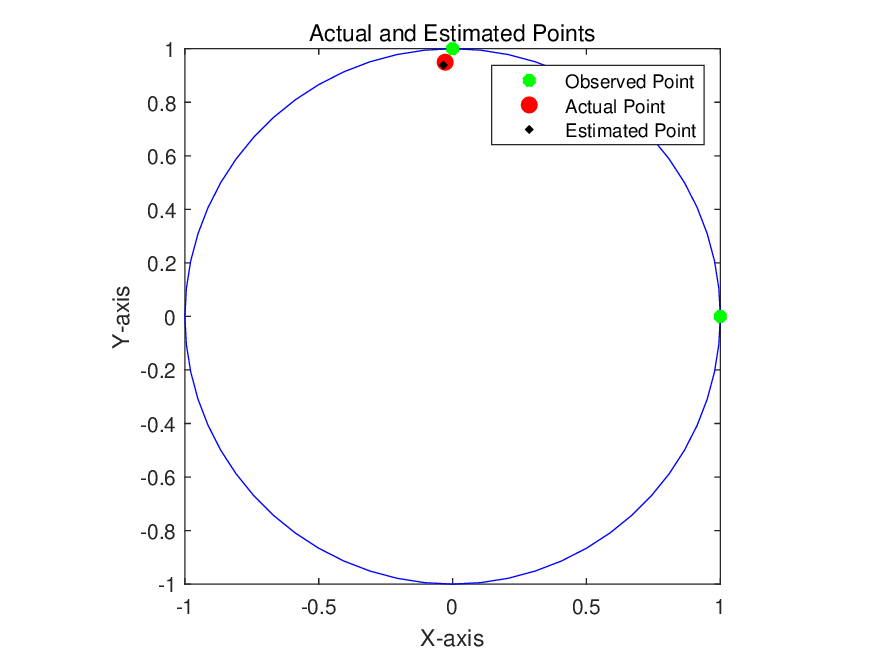}
}
\subfigure[$5 \%$ random noise]{
\includegraphics[width=4.5cm]{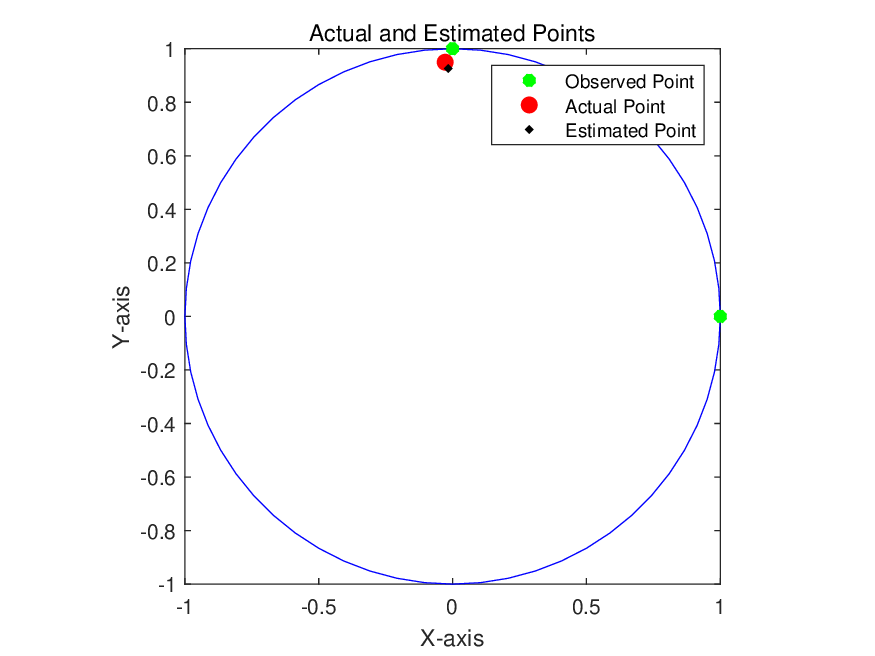}
}
\subfigure[$10 \%$ random noise]{
\includegraphics[width=4.5cm]{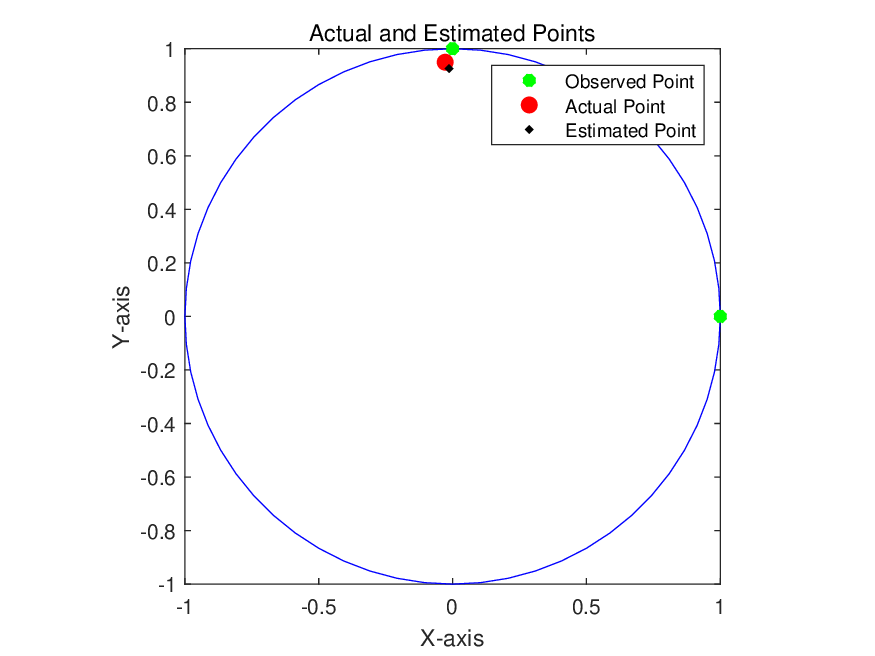}
}
\caption{ Observation, Actual and Estimated Points  \eqref{46}.}\label{j1s1}
\end{figure}

\begin{table}
\centering
\caption{Inverse computation of $r_1$ and $\theta_1$ with $3\%$, $5\%$, $10\%$ random noise \eqref{46}.}
\label{Tj1s1}
\begin{tabular}{ccccc}
\hline
           & $r_1$            & $\theta_1$           & $x_1$             & $y_1$             \\
\hline
Actual     & $0.95$           & $1.6$              & $-0.0277$         & $0.9496$          \\
\hline
Estimated ($\delta=3\%$)  & $0.9396$         & $1.6075$          & $-0.0345$         & $ 0.9390$          \\
Error       & $1.04 \times 10^{-2}$ & $7.50 \times 10^{-3}$  & $6.70 \times 10^{-3}$ & $1.06 \times 10^{-2}$ \\
\hline
Estimated ($\delta=5\%$)  & $0.9260$         & $1.5884$          & $-0.0163$         & $0.9259$          \\
Error       & $2.40 \times 10^{-2}$ & $1.16 \times 10^{-2}$  & $1.14 \times 10^{-2}$ & $2.37 \times 10^{-2}$ \\
\hline
Estimated ($\delta=10\%$) & $0.9260$         & $1.5849$          & $-0.0131$         & $0.9259$          \\
Error       & $2.40 \times 10^{-2}$ & $1.51 \times 10^{-2}$  & $1.47 \times 10^{-2}$ & $2.37 \times 10^{-2}$ \\
\hline
\end{tabular}
\end{table}

To support potential future extensions of our theory, we have designed three numerical schemes for the inversion of point sources.  The least-squares approach aims to minimize the objective function \( J \), defined as follows:

\paragraph{For two observed points in space and one point in time:}
\begin{equation}\label{51}
\begin{split}
&J_1(r_1,\theta_1)= \sum_{j=1}^{2}\left(\frac{\partial u_h}{\partial \overrightarrow{\mathbf{n}}}(z_j, t^*)-f_{ij}^{\delta}\right)^2, ~t^* \in(0, T),~ z_1, z_2 \in Z_{o b} \subset \partial \Omega.
\end{split}
\end{equation}

\paragraph{For one observed point in space and several points in time:}
\begin{equation}\label{52}
\begin{split}
&J_2(r_1,\theta_1)= \int_0^T\left(\frac{\partial u_h}{\partial \overrightarrow{\mathbf{n}}}(z^*, t)-f_j^{\delta}\right)^2\ dt, ~t \in(0, T),~ z^* \in Z_{o b} \subset \partial \Omega.
\end{split}
\end{equation}

\paragraph{For one observed point in space and two points in time:}
\begin{equation}\label{53}
\begin{split}
&J_3(r_1,\theta_1)= \sum_{i=1}^{2}\left(\frac{\partial u_h}{\partial \overrightarrow{\mathbf{n}}}(z^*, t_i)-f_{ij}^{\delta}\right)^2, ~t_1,~t_2 \in(0, T),~ z^* \in Z_{o b} \subset \partial \Omega.
\end{split}
\end{equation}

\begin{example}\label{exam12}\
 In this example, we consider the observed points that are located away
from the actual point.
\end{example}

\begin{itemize}
    \item We use two observed points in space and one point in time \eqref{51}.  See Figure \ref{Fs2t1y} and Table \ref{Ts2t1y} with $\delta=3 \%, 5 \%, 10 \%$.
    \item We use one observed point in space and several points in time \eqref{52}.  See Figure \ref{Fs1tsy} and Table \ref{Ts1tsy} with $\delta=3 \%, 5 \%, 10 \%$.
    \item We use one observed point in space and two points in time \eqref{53}.  See Figure \ref{Fs1t2y} and Table \ref{Ts1t2y} with $\delta=3 \%, 5 \%, 10 \%$.
\end{itemize}

From Figures \ref{Fs2t1y} to  \ref{Fs1t2y} and Tables \ref{Ts2t1y} to \ref{Ts1t2y}, it is evident that when the observed points deviate significantly from the actual point, the performance of algorithms \eqref{51}, \eqref{52}, and \eqref{53} deteriorates, and they exhibit poor robustness to noise.

\begin{figure}
\centering
\subfigure[$3 \%$ random noise]{
\includegraphics[width=4.5cm]{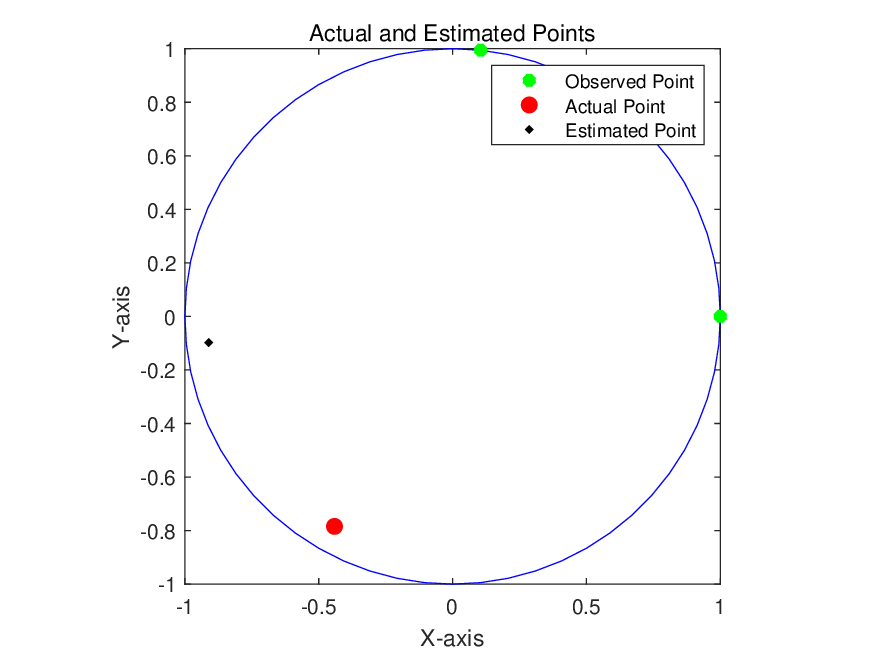}
}
\subfigure[$5 \%$ random noise]{
\includegraphics[width=4.5cm]{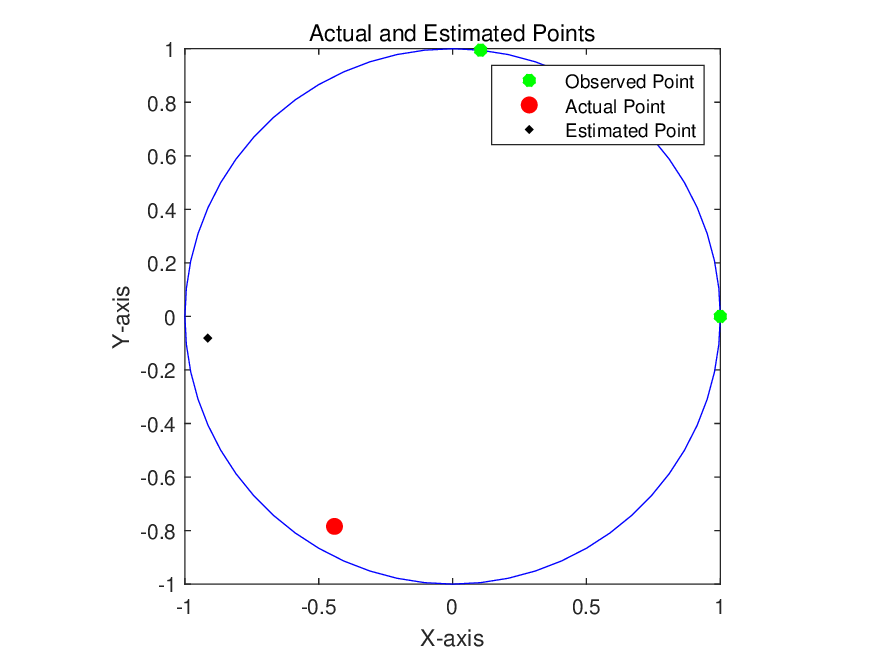}
}
\subfigure[$10 \%$ random noise]{
\includegraphics[width=4.5cm]{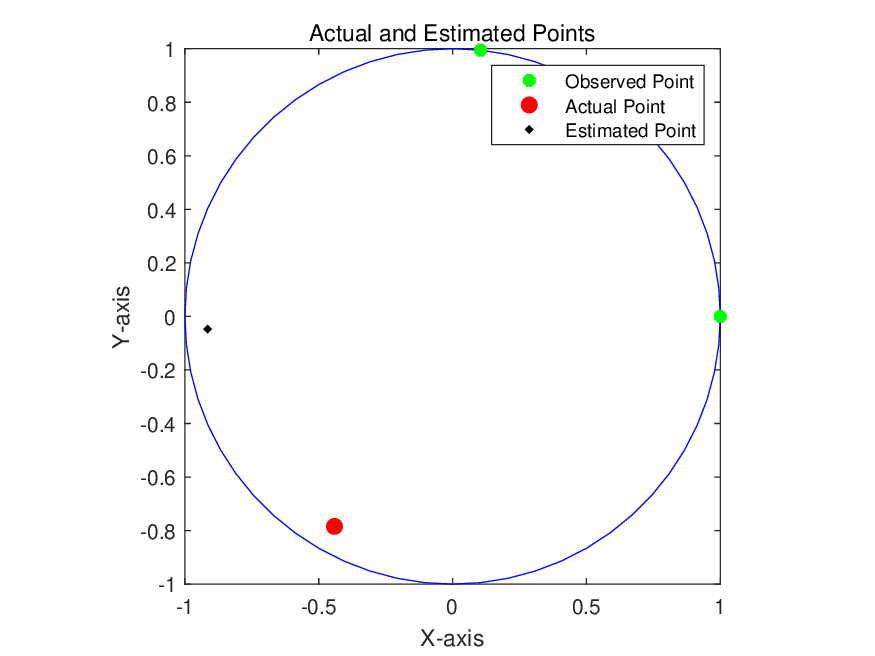}
}
\caption{ Observation, Actual and Estimated Points  \eqref{51}.}\label{Fs2t1y}
\end{figure}
\begin{table}
\centering
\caption{Inverse computation of $r_1$ and $\theta_1$ with $3\%$, $5\%$, $10\%$ random noise \eqref{51}.}\label{Ts2t1y}
\begin{tabular}{ccccc}
\hline
           & $r_1$            & $\theta_1$           & $x_1$             & $y_1$             \\
\hline
Actual     & $0.9$            & $4.2$              & $-0.4412$         & $-0.7844$         \\
\hline
Estimated ($\delta=3\%$)  & $0.9163$        & $3.2484$          & $-0.9111$         & $-0.0977$         \\
Error       & $1.63 \times 10^{-2}$ & $9.516 \times 10^{-1}$  & $4.698 \times 10^{-1}$ & $6.867 \times 10^{-1}$ \\
\hline
Estimated ($\delta=5\%$)  & $0.9182$        & $3.2300$          & $-0.9146$         & $-0.0811$         \\
Error       & $1.82 \times 10^{-2}$ & $9.7 \times 10^{-1}$  & $4.734 \times 10^{-1}$ & $7.033 \times 10^{-1}$ \\
\hline
Estimated ($\delta=10\%$) & $0.9162$        & $3.1934$          & $-0.9150$         & $-0.0474$         \\
Error       & $1.62 \times 10^{-2}$ & $1.0066 \times 10^{0}$  & $4.737 \times 10^{-1}$ & $7.370 \times 10^{-1}$ \\
\hline
\end{tabular}
\end{table}

\begin{figure}
\centering
\subfigure[$3 \%$ random noise]{
\includegraphics[width=4.5cm]{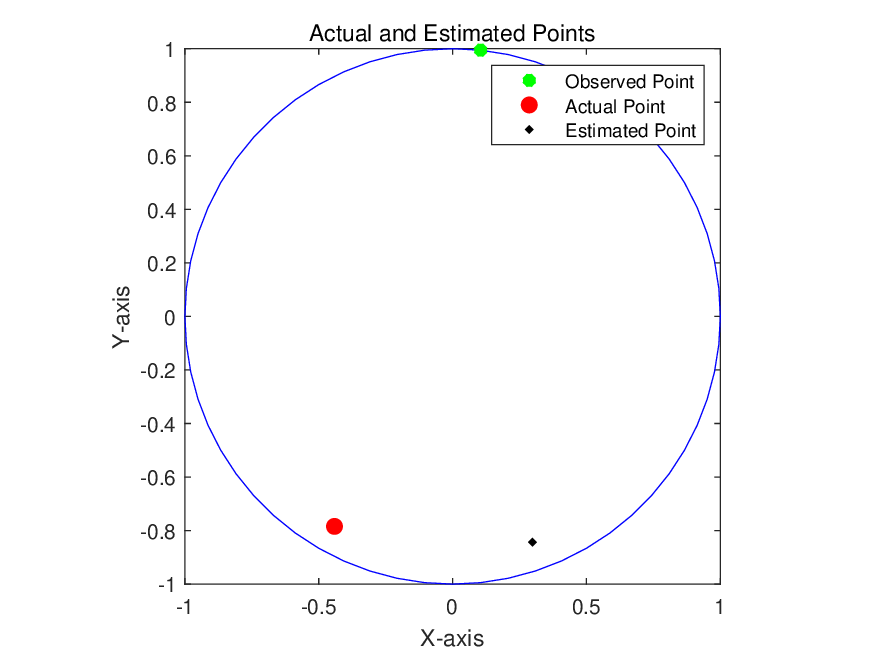}
}
\subfigure[$5 \%$ random noise]{
\includegraphics[width=4.5cm]{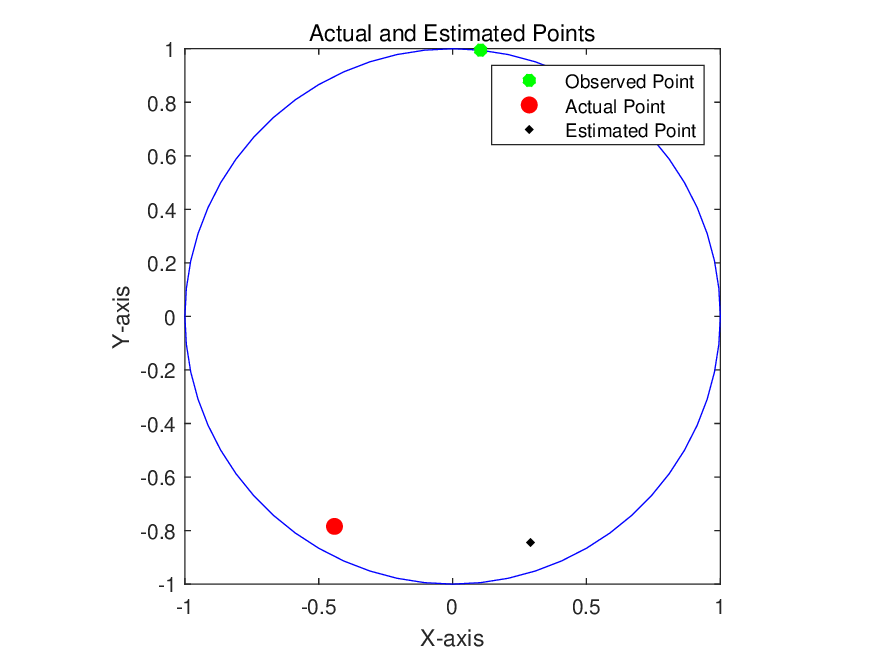}
}
\subfigure[$10 \%$ random noise]{
\includegraphics[width=4.5cm]{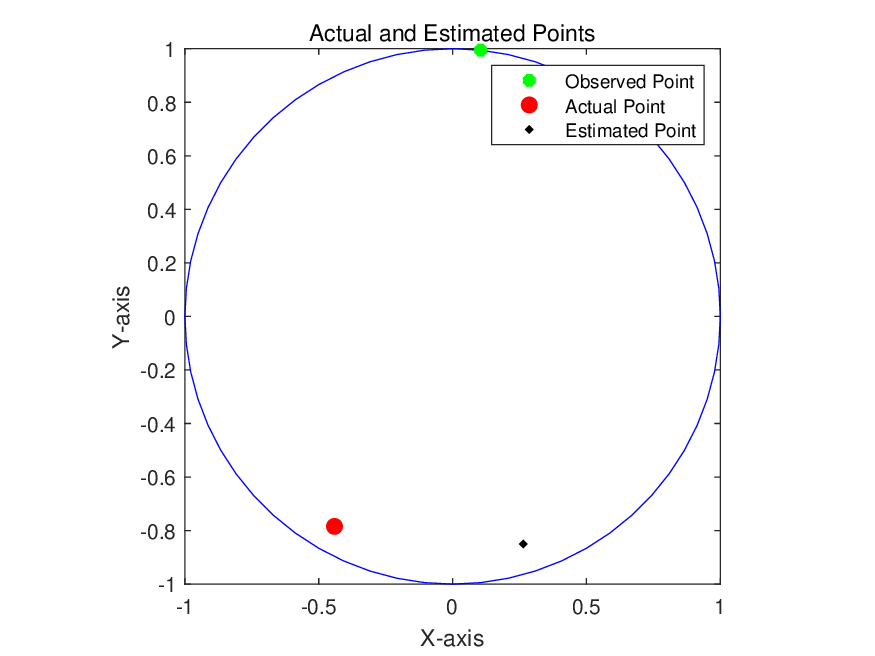}
}
\caption{ Observation, Actual and Estimated Points  \eqref{52}.}\label{Fs1tsy}
\end{figure}

\begin{table}
\centering
\caption{Inverse computation of $r_1$ and $\theta_1$ with $3\%$, $5\%$, $10\%$ random noise \eqref{52}.}\label{Ts1tsy}
\begin{tabular}{ccccc}
\hline
           & $r_1$            & $\theta_1$           & $x_1$             & $y_1$             \\
\hline
Actual     & $0.9$            & $4.2$              & $-0.4412$         & $-0.7844$         \\
\hline
Estimated ($\delta=3\%$)  & $0.8944$        & $5.0523$          & $-0.5622$         & $-0.7077$         \\
Error       & $5.6 \times 10^{-3}$ & $8.523 \times 10^{-1}$ & $7.394 \times 10^{-1}$ & $5.88 \times 10^{-2}$ \\
\hline
Estimated ($\delta=5\%$)  & $0.8932$        & $5.0443$          & $0.2910$         & $-0.8445$         \\
Error       & $6.8 \times 10^{-3}$ & $8.443 \times 10^{-1}$ & $7.323 \times 10^{-1}$ & $6.00 \times 10^{-2}$ \\
\hline
Estimated ($\delta=10\%$) & $0.8901$        & $5.0133$          & $ 0.2638$         & $-0.8501$         \\
Error       & $9.9 \times 10^{-3}$ & $8.133 \times 10^{-1}$  & $7.051 \times 10^{-1}$ & $6.57 \times 10^{-2}$ \\
\hline
\end{tabular}
\end{table}

\begin{figure}
\centering
\subfigure[$3 \%$ random noise]{
\includegraphics[width=4.5cm]{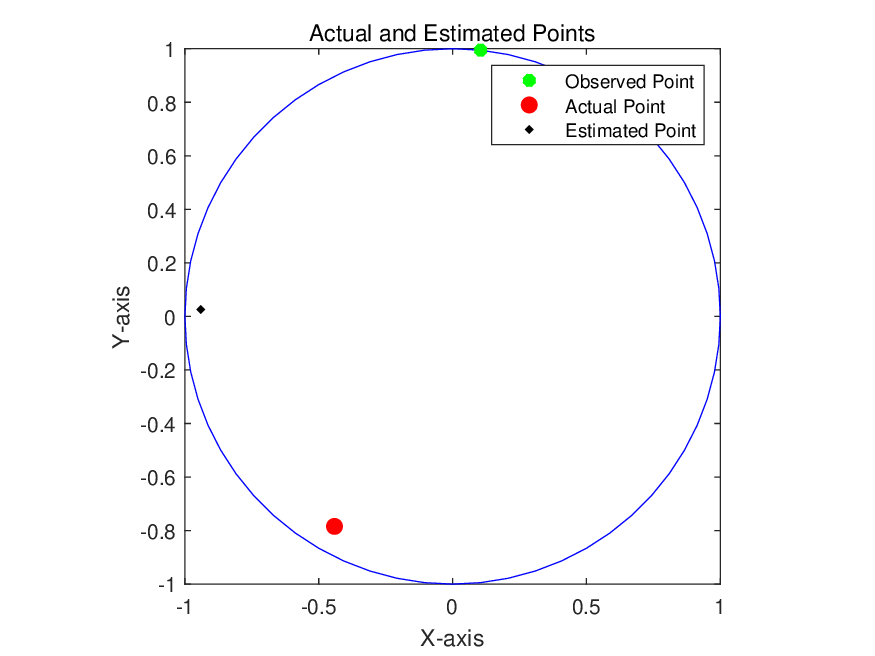}
}
\subfigure[$5 \%$ random noise]{
\includegraphics[width=4.5cm]{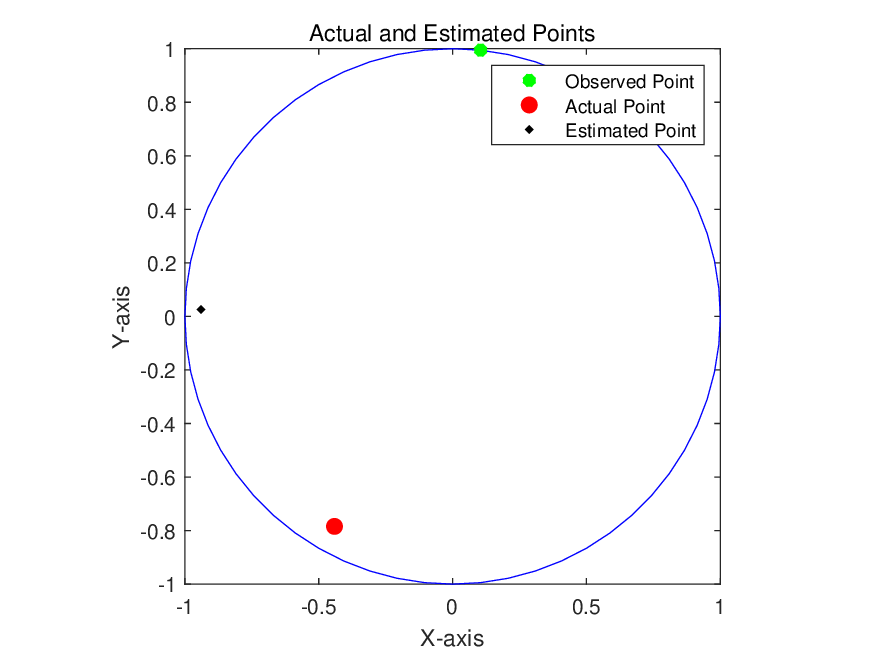}
}
\subfigure[$10 \%$ random noise]{
\includegraphics[width=4.5cm]{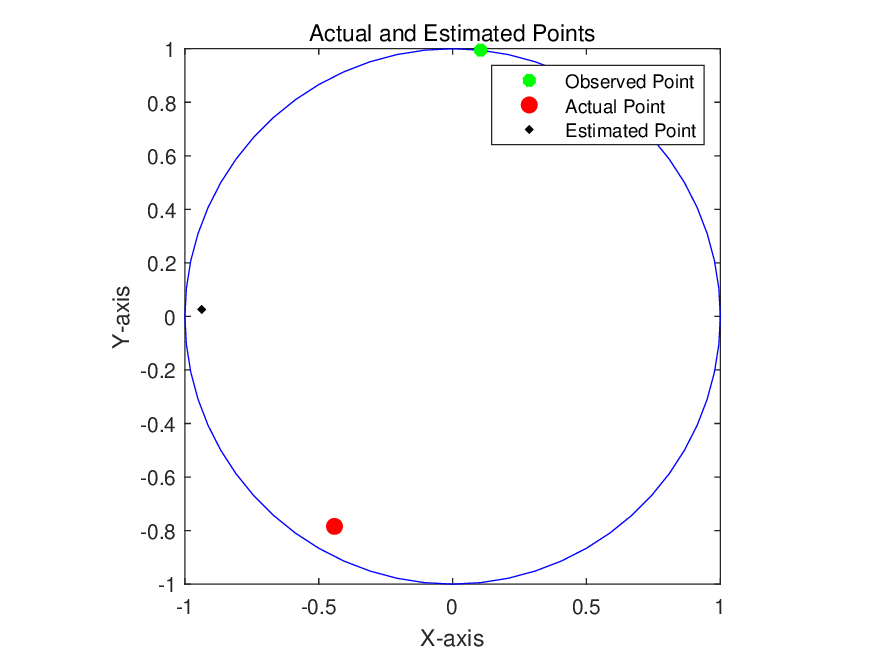}
}
\caption{ Observation, Actual and Estimated Points  \eqref{53}.}\label{Fs1t2y}
\end{figure}

\begin{table}
\centering
\caption{Inverse computation of $r_1$ and $\theta_1$ with $3\%$, $5\%$, $10\%$ random noise \eqref{53}.}
\label{Ts1t2y}
\begin{tabular}{ccccc}
\hline
           & $r_1$ & $\theta_1$ & $x_1$ & $y_1$ \\
\hline
Actual                    & $0.9$       & $4.2$       & $-0.4412$   & $-0.7844$   \\
\hline
Estimated ($\delta=3\%$)  & $0.9412$    & $3.1141$    & $-0.9408$   & $0.0259$   \\
Error                     & $4.12 \times 10^{-2}$ & $1.0859 \times 10^{0}$ & $4.966 \times 10^{-1}$ & $8.103 \times 10^{-1}$ \\
\hline
Estimated ($\delta=5\%$)  & $0.9402$   & $3.1139$    & $-0.9398$   & $ 0.0260$   \\
Error                     & $4.02 \times 10^{-2}$ & $1.0861 \times 10^{0}$ & $4.986 \times 10^{-1}$ & $8.105 \times 10^{-1}$ \\
\hline
Estimated ($\delta=10\%$) & $0.9375$   & $3.1137$    & $-0.9371$   & $-0.0261$   \\
Error                     & $3.75 \times 10^{-2}$ & $1.0863 \times 10^{0}$ & $4.959 \times 10^{-1}$ & $8.106\times 10^{-1}$ \\
\hline
\end{tabular}
\end{table}

\begin{example}\label{exam13}\
In this example, we consider the observed points that are located  close to
 the actual point.
\end{example}

\begin{itemize}
    \item We use two observed points in space and one point in time \eqref{51}.  See Figure \ref{Fs2t1j} and  Table \ref{Ts2t1j} with  $\delta=3 \%, 5 \%, 10 \%$.
    \item We use one observed point in space and several points in time \eqref{52}. See Figure \ref{Fs1tsj} and  Table \ref{Ts1tsj} with  $\delta=3 \%, 5 \%, 10 \%$.
    \item We use one observed point in space and two points in time \eqref{53}.  See Figure \ref{Fs1t2j} and  Table \ref{Ts1t2j} with  $\delta=3 \%, 5 \%, 10 \%$.
\end{itemize}

As shown in Figures \ref{Fs2t1j} to  \ref{Fs1t2j} and Tables \ref{Ts2t1j} to \ref{Ts1t2j}, algorithms \eqref{51}, \eqref{52}, and \eqref{53} perform well and exhibit strong robustness to noise when the observed points are close to the actual point.

\begin{figure}
\centering
\subfigure[$3 \%$ random noise]{
\includegraphics[width=4.5cm]{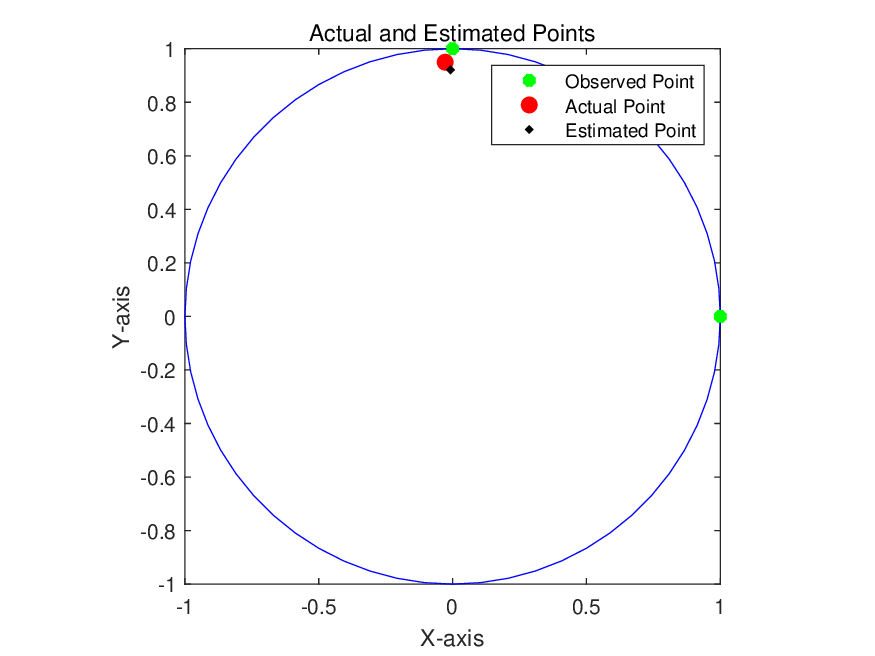}
}
\subfigure[$5 \%$ random noise]{
\includegraphics[width=4.5cm]{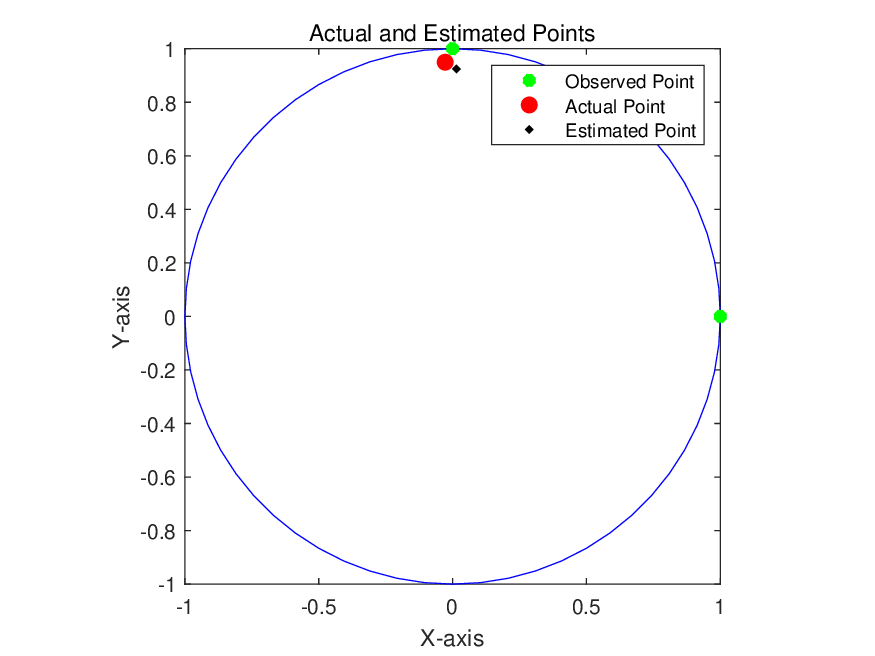}
}
\subfigure[$10 \%$ random noise]{
\includegraphics[width=4.5cm]{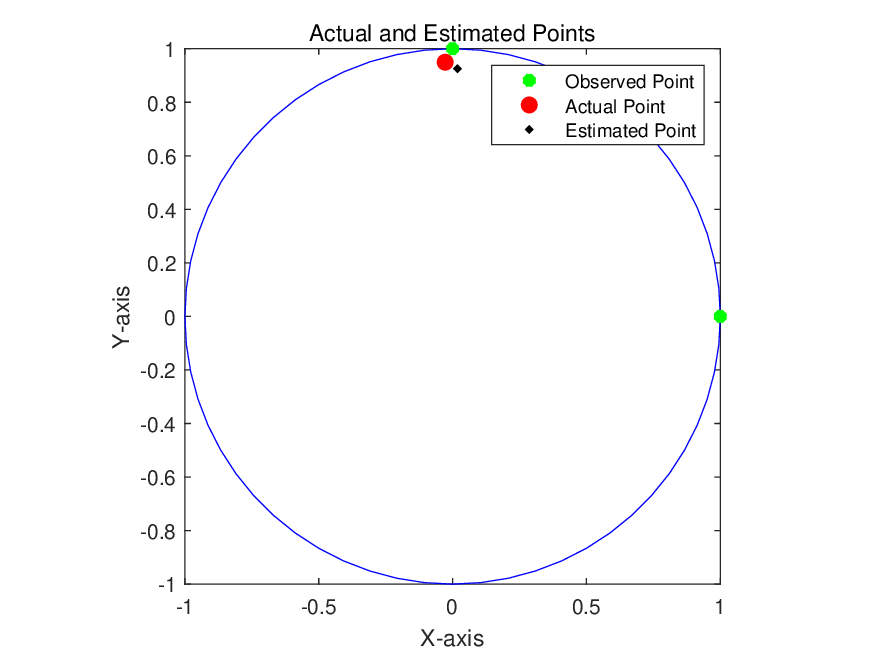}
}
\caption{ Observation, Actual and Estimated Points  \eqref{51}.}\label{Fs2t1j}
\end{figure}

\begin{table}
\centering
\caption{Inverse computation of $r_1$ and $\theta_1$ with $3\%$, $5\%$, $10\%$ random noise \eqref{51}.}
\label{Ts2t1j}
\begin{tabular}{ccccc}
\hline
           & $r_1$ & $\theta_1$ & $x_1$ & $y_1$ \\
\hline
Actual                    & $0.95$      & $1.6$       & $-0.0277$   & $0.9496$   \\
\hline
Estimated ($\delta=3\%$)  & $0.9209$   & $1.5794$    & $-0.0079$    & $0.9209$   \\
Error                     & $2.91 \times 10^{-2}$ & $2.06 \times 10^{-2}$ & $1.98 \times 10^{-2}$ & $2.87 \times 10^{-2}$ \\
\hline
Estimated ($\delta=5\%$)  & $0.9245$   & $1.5553$    & $0.0143$    & $0.9244$   \\
Error                     & $2.55 \times 10^{-2}$ & $4.47 \times 10^{-2}$ & $4.21 \times 10^{-2}$ & $2.52 \times 10^{-2}$ \\
\hline
Estimated ($\delta=10\%$) & $0.9256$   & $1.5512$    & $0.0181$   & $0.9254$   \\
Error                     & $2.44 \times 10^{-2}$ & $4.88 \times 10^{-2}$ & $4.59 \times 10^{-2}$ & $2.42 \times 10^{-2}$ \\
\hline
\end{tabular}
\end{table}

\begin{figure}
\centering
\subfigure[$3 \%$ random noise]{
\includegraphics[width=4.5cm]{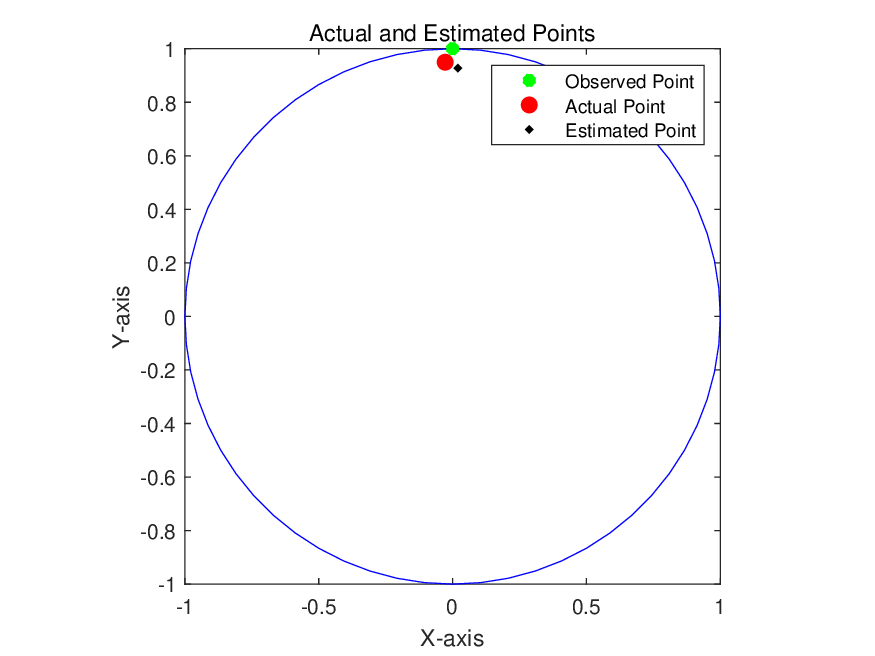}
}
\subfigure[$5 \%$ random noise]{
\includegraphics[width=4.5cm]{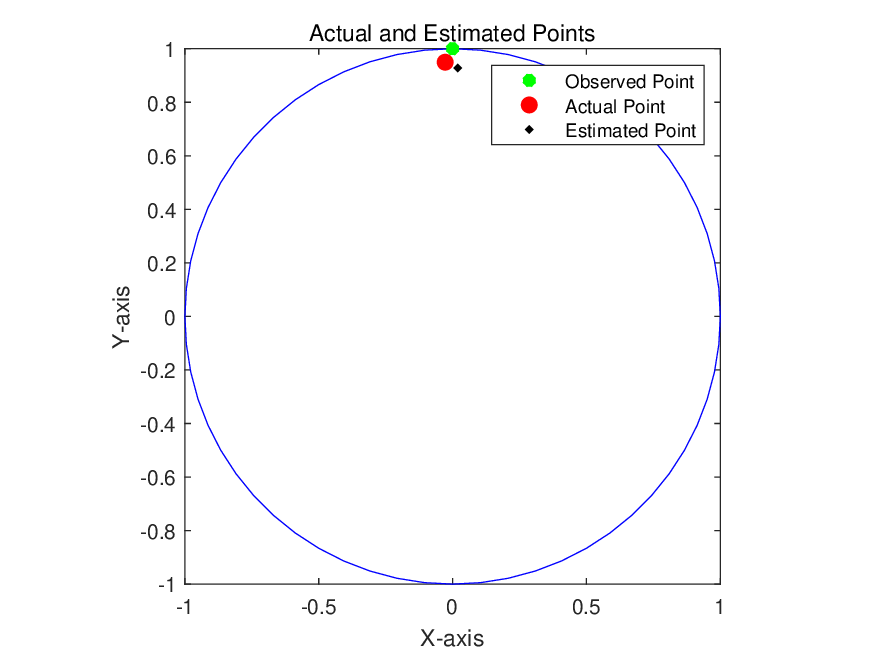}
}
\subfigure[$10 \%$ random noise]{
\includegraphics[width=4.5cm]{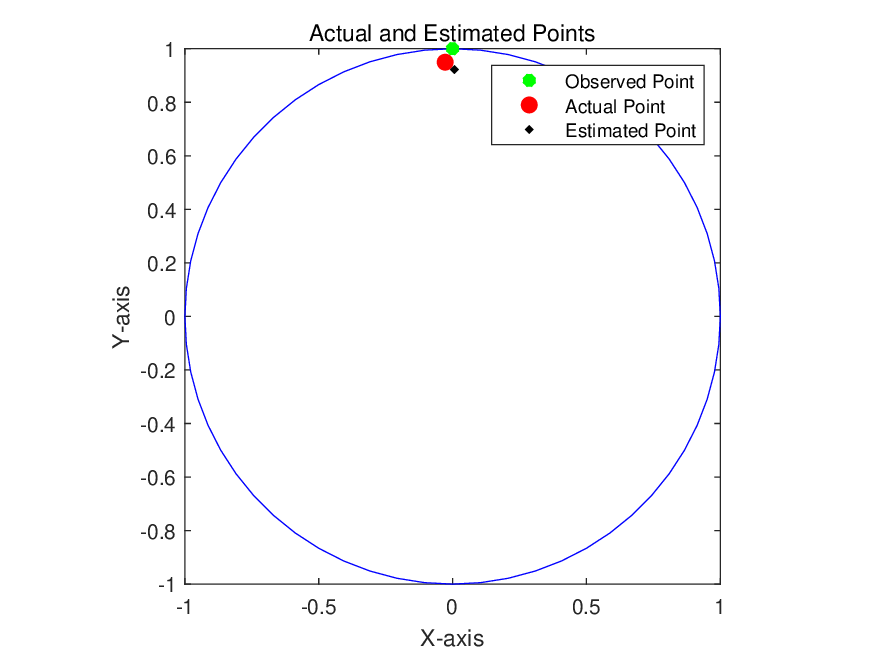}
}
\caption{ Observation, Actual and Estimated Points  \eqref{52}.}\label{Fs1tsj}
\end{figure}

\begin{table}
\centering
\caption{Inverse computation of $r_1$ and $\theta_1$ with $3\%$, $5\%$, $10\%$ random noise \eqref{52}.}\label{Ts1tsj}
\begin{tabular}{ccccc}
\hline
           & $r_1$            & $\theta_1$           & $x_1$             & $y_1$             \\
\hline
Actual     & $0.95$           & $1.6$              & $-0.0277$         & $0.9496$          \\
\hline
Estimated ($\delta=3\%$)  & $0.9277$        & $1.5494$          & $0.0198$          & $ 0.9275$          \\
Error       & $2.23 \times 10^{-2}$ & $5.06 \times 10^{-2}$ & $4.76 \times 10^{-2}$ & $2.21 \times 10^{-2}$ \\
\hline
Estimated ($\delta=5\%$)  & $0.9280$        & $1.5502$          & $0.0191$          & $0.9278$          \\
Error       & $2.20 \times 10^{-2}$ & $4.98 \times 10^{-2}$ & $4.69 \times 10^{-2}$ & $2.18 \times 10^{-2}$ \\
\hline
Estimated ($\delta=10\%$) & $0.9222$        & $1.5634$          & $0.0068$          & $0.9222$          \\
Error       & $2.78 \times 10^{-2}$ & $3.66 \times 10^{-2}$ & $3.46 \times 10^{-2}$ & $2.74 \times 10^{-2}$ \\
\hline
\end{tabular}
\end{table}

\begin{table}
\centering
\caption{Inverse computation of $r_1$ and $\theta_1$ with $3\%$, $5\%$, $10\%$ random noise \eqref{53}.}\label{Ts1t2j}
\begin{tabular}{ccccc}
\hline
           & $r_1$            & $\theta_1$           & $x_1$             & $y_1$             \\
\hline
Actual     & $0.95$           & $1.6$              & $-0.0277$         & $0.9496$          \\
\hline
Estimated ($\delta=3\%$)  & $0.9231$         & $1.5854$          & $-0.0135$          & $0.9230$          \\
Error       & $2.69 \times 10^{-2}$ & $1.46 \times 10^{-2}$ & $1.43 \times 10^{-2}$ & $2.66 \times 10^{-2}$ \\
\hline
Estimated ($\delta=5\%$)  & $0.9198$        & $1.5631$          & $0.0071$          & $0.9198$          \\
Error       & $3.02 \times 10^{-2}$ & $3.69 \times 10^{-2}$ & $3.48 \times 10^{-2}$ & $2.98 \times 10^{-2}$ \\
\hline
Estimated ($\delta=10\%$) & $0.9244$        & $1.5840$          & $-0.0122$          & $0.9243$          \\
Error       & $2.56 \times 10^{-2}$ & $1.60 \times 10^{-2}$ & $1.55 \times 10^{-2}$ & $2.53 \times 10^{-2}$ \\
\hline
\end{tabular}
\end{table}

\begin{figure}
\centering
\subfigure[$3 \%$ random noise]{
\includegraphics[width=4.5cm]{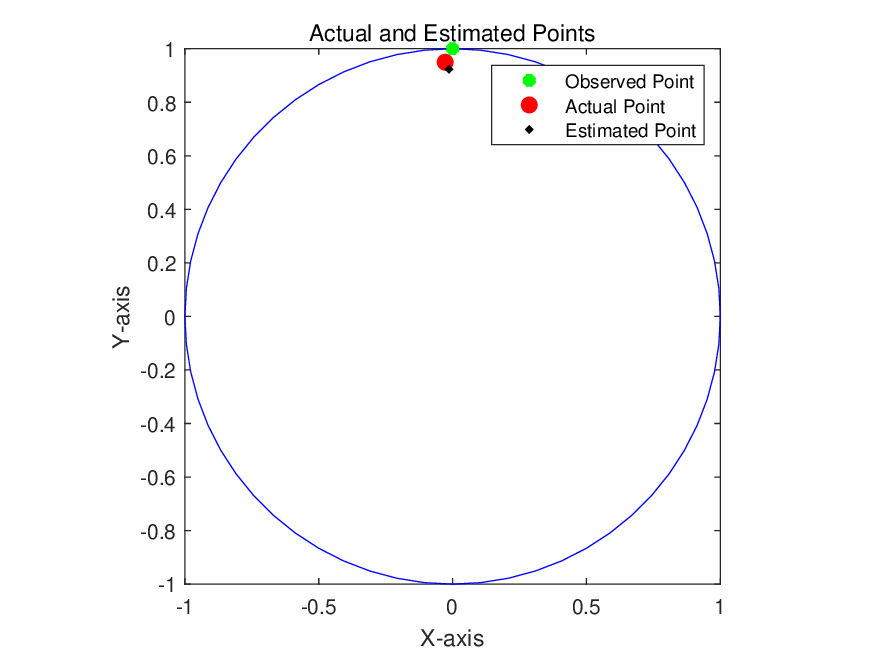}
}
\subfigure[$5 \%$ random noise]{
\includegraphics[width=4.5cm]{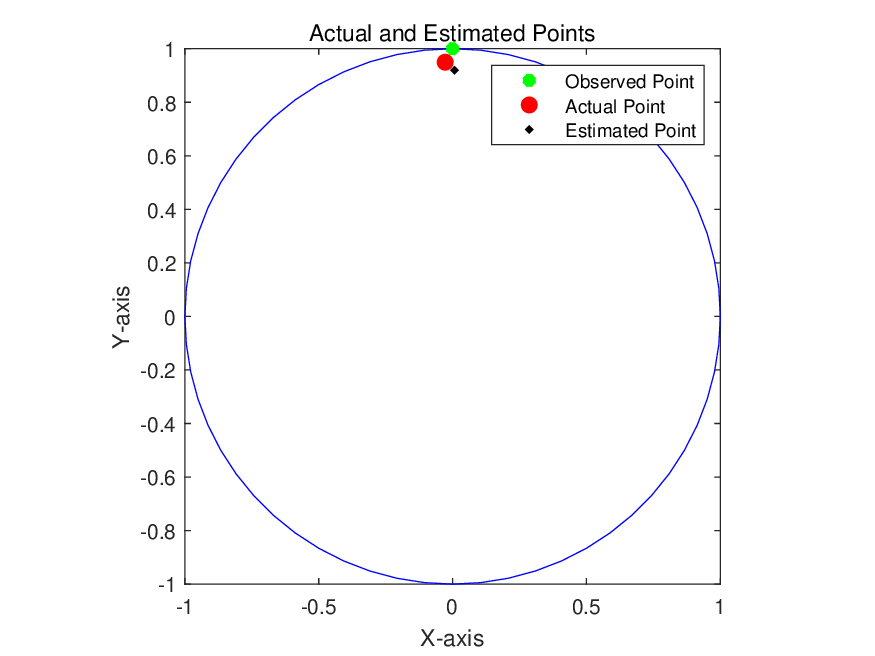}
}
\subfigure[$10 \%$ random noise]{
\includegraphics[width=4.5cm]{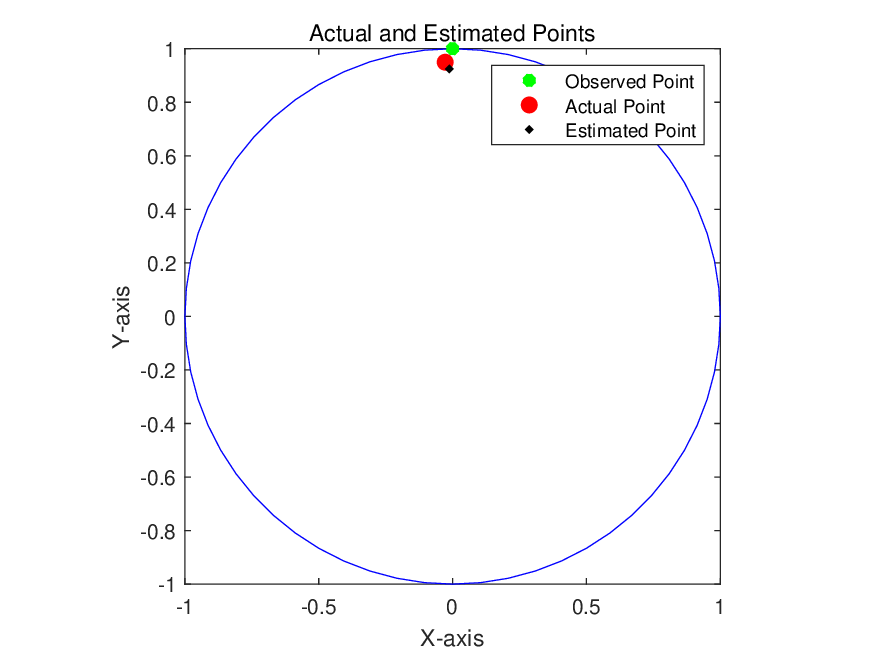}
}
\caption{ Observation, Actual and Estimated Points  \eqref{53}.}\label{Fs1t2j}
\end{figure}

\section{Acknowledgments.}
Zhidong Zhang is supported by the National Key Research and Development Plan of China(Grant No.2023YFB3002400), the National Natural Science Foundation of China (Grant No. 12101627), and the Fundamental Research Funds for the Central Universities, Sun Yat-sen University (Grant No. 24qnpy304). Wenlong Zhang is partially supported by the National Natural Science Foundation of China under grant numbers No.12371423 and No.12241104.

\section{Appendix.}
  \begin{lemma}\label{lemma_smooth}
  For the model \eqref{eq-ibvp}, the boundary flux data $\frac{\partial u}{\n}(z,t)$ is smooth on $\partial\Omega\times (0,T)$.
\end{lemma}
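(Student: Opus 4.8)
The plan is to exploit the fact that the point source sits at an interior location $s_*\in\Omega$, so that $d:=\mathrm{dist}(s_*,\partial\Omega)>0$ and the equation is \emph{homogeneous} in a whole neighborhood of $\partial\Omega$. Parabolic regularity should then force the flux to be smooth on the boundary, and the only singularity of $u$ (a logarithmic spatial one) remains confined to the interior point $s_*$, away from where the trace is taken.

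Concretely, first I would split off the stationary singularity. Let $u_\infty$ be the Dirichlet Green's function of $-\Delta$ with pole at $s_*$, i.e. the solution of $-\Delta u_\infty=\delta_{s_*}$ in $\Omega$ with $u_\infty|_{\partial\Omega}=0$. Since $u_\infty$ is harmonic on $\Omega\setminus\{s_*\}$ and vanishes on the real-analytic circle $\partial\Omega$, standard elliptic boundary regularity gives $u_\infty\in C^\infty$ in a neighborhood of $\partial\Omega$; in particular its flux $\frac{\partial u_\infty}{\n}$ is a smooth, $t$-independent function on $\partial\Omega$. Moreover $u_\infty\in L^2(\Omega)$, because the logarithmic singularity is square integrable in $\mathbb R^2$.

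Second, I would set $w:=u-u_\infty$ and check that $w$ solves the homogeneous heat equation $(\partial_t-\Delta)w=0$ with $w|_{\partial\Omega}=0$ and initial datum $w(\cdot,0)=-u_\infty\in L^2(\Omega)$, using $-\Delta u_\infty=\delta_{s_*}=F$ and $\partial_t u_\infty=0$. Writing $w(\cdot,t)=e^{t\Delta}(-u_\infty)$ with the analytic semigroup generated by the Dirichlet Laplacian, the smoothing property gives $w(\cdot,t)\in\Dg$ for every $\gamma>0$ and every $t>0$, with $t\mapsto w(\cdot,t)$ infinitely differentiable into each $\Dg$ (each $\partial_t$ costs one factor $\Delta$, harmless for $t>0$). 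By elliptic regularity these integer-order domains embed into $H^{2k}(\Omega)\cap H_0^1(\Omega)$, and Sobolev embedding in dimension two then yields $w\in C^\infty(\overline\Omega\times(0,T))$; hence $\frac{\partial w}{\n}$ is smooth on $\partial\Omega\times(0,T)$. Adding the two contributions gives $\frac{\partial u}{\n}=\frac{\partial u_\infty}{\n}+\frac{\partial w}{\n}$, which is smooth on $\partial\Omega\times(0,T)$, proving the claim.

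I expect the main obstacle to be the second step: rigorously upgrading the spectral/semigroup smoothing (which a priori lives only in the scale of fractional domains $\Dg$) to genuine joint smoothness of $w$ and of its normal-derivative trace up to the boundary. This requires combining the instantaneous gain of regularity for $t>0$ with elliptic regularity up to the smooth boundary and the compatibility of the iterated domains with the Dirichlet condition; the interval being open at $t=0$ is exactly what makes this available. As an alternative route that sidesteps the decomposition, I could argue directly from the Duhamel representation $u(\cdot,t)=\int_0^t p(\cdot,s_*,\tau)\,d\tau$ with the Dirichlet heat kernel $p$, since the Gaussian bounds on $p$ and its derivatives decay like $e^{-cd^2/\tau}$ as $\tau\to0^+$ (using $|z-s_*|\ge d$ for $z\in\partial\Omega$), making the flux integral and all of its $(z,t)$-derivatives absolutely convergent and smooth.
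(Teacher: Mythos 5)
Your proof is correct, but it takes a genuinely different route from the paper. The paper works with the free-space heat potential $\varphi_{s_*}(x,t)=\int_0^t \Phi(x-s_*,t-s)\,ds$ and removes its boundary trace by a method-of-images construction special to the disc: using the Kelvin identity $|x-s_*|=|s_*|\,|x-s_*^*|$ for $|x|=1$ with the reflected pole $s_*^*=s_*/|s_*|^2\notin\overline\Omega$, together with a time rescaling $t'=t/|s_*|^2$ (needed because the heat kernel, unlike the Laplace fundamental solution, is not scale invariant), it builds an auxiliary caloric function $\tilde\varphi$ that agrees with $\varphi$ on $\partial\Omega$; the remainders then solve heat equations whose data are smooth near the boundary, since all singularities sit either at the interior point $s_*$ or at the exterior point $s_*^*$. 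You instead subtract the \emph{stationary} singular part: the elliptic Dirichlet Green's function $u_\infty$ with pole $s_*$, so that $w=u-u_\infty=e^{t\Delta}(-u_\infty)$ is a pure Dirichlet-semigroup orbit with $L^2$ initial datum, and analytic-semigroup smoothing plus elliptic regularity ($\mathcal{D}((-\Delta)^k)\hookrightarrow H^{2k}(\Omega)$) and Sobolev embedding give $w\in C^\infty(\overline\Omega\times(0,T))$, while $\frac{\partial u_\infty}{\n}$ is a smooth time-independent function. Your decomposition is shorter and more robust: it needs no disc geometry, works verbatim for any bounded smooth domain in any dimension, and handles $s_*=0$ (where the paper's reflection degenerates, the image point escaping to infinity) without a separate case; the paper's kernel-level construction is more explicit and exhibits the boundary flux concretely, which could be useful for quantitative estimates. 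Two small points you should make explicit to be complete: (i) the identity $u=u_\infty+e^{t\Delta}(-u_\infty)$ must be matched with the (transposition/distributional) definition of the solution with measure source, which follows since both sides solve $(\partial_t-\Delta)(\cdot)=\delta_{s_*}$ with zero initial and boundary data and such solutions are unique; and (ii) joint $(x,t)$-smoothness follows from analyticity of $t\mapsto e^{t\Delta}f$ from $(0,\infty)$ into each $\mathcal{D}((-\Delta)^k)$, exactly the upgrade you flag as the main obstacle --- it is standard and unproblematic on the open interval $(0,T)$. Your alternative Duhamel argument via Gaussian bounds $e^{-c\,d^2/\tau}$ on the Dirichlet heat kernel, with $d=\mathrm{dist}(s_*,\partial\Omega)>0$, is also a valid third route.
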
  
  \begin{proof}

First, we define the fundamental solution:  
\begin{equation} 
\Phi(x, t) =
\begin{cases} 
\frac{1}{4\pi t} e^{-\frac{|x|^2}{4t}}, & t > 0, \\ 
0, & t \leq 0,
\end{cases}
\end{equation}
which satisfies:  
\begin{equation}  
\left\{
\begin{aligned}
(\partial_t - \Delta) \Phi(x, t) &= 0, && (x, t) \in \Omega \times (0, T), \\
\Phi(x, 0) &= \delta_{s_*}.
\end{aligned}
\right.
\end{equation}

Next, we construct the Green's function \(G(x, s_*, t)\), which satisfies:  
\begin{equation}  
\left\{
\begin{aligned}
(\partial_t - \Delta) G(x, t) &= \delta_{s_*}, && (x, t) \in \Omega \times (0, T), \\
G(x, 0) &= 0, && (x, t) \in \Omega \times \{0\} \cup \partial\Omega \times (0, T).
\end{aligned}
\right.
\end{equation}

We then define:  
\[
\varphi_{s_*}(x, t) = \int_{0}^{t} \int_{\mathbb{R}^n} \Phi(x - y, t - s) \delta_{s_*} \, dy \, ds = \int_{0}^{t} \Phi(x - s_*, t - s) \, ds,
\]
where \(\varphi_{s_*}(x, t)\) satisfies:  
\begin{equation}  
\left\{
\begin{aligned}
(\partial_t - \Delta) \varphi(x, t) &= \delta_{s_*}, && (x, t) \in \mathbb{R}^2 \times (0, T), \\
\varphi(x, 0) &= 0.
\end{aligned}
\right.
\end{equation}

Now, define \(x_{s_*}^{\ast} = \frac{x_{s_*}}{|x_{s_*}|^2} \notin \Omega\). For \(x \in \partial \Omega\), we have:

\[
|x - x_{s_*}|^2 = |x_{s_*}|^2 |x - x_{s_*}^*|.
\]

From this, we observe that \(\Phi\) depends only on \(|x - x_{s_*}|\), and the same holds for \(\varphi\). Thus, we have:

\begin{equation}
\begin{aligned}
\Phi\left(x - x_{s_*}, t\right) 
&= \Phi\left( |x_{s_*}| \left( x - x_{s_*}^* \right), t \right), \quad x \in \partial \Omega\\
&= \frac{1}{4 \pi t} \exp \left( - \frac{ |x_{s_*}|^2 |x - x_{s_*}^*| }{ 4 t } \right), \quad x \in \partial \Omega.
\end{aligned}
\end{equation}

Let
\begin{equation}
\tilde{\varphi}(x, t) = \int_0^t \Phi\left(\left|x_{s_*}\right| \cdot \left(x - x_{s_*}^*\right), t-s\right) \, ds.
\end{equation}
Then, we have
\begin{equation}
\varphi(x, t) = \tilde{\varphi}\left(x, t\right), \quad \text{for } x \in \partial \Omega,
\end{equation}
and
\begin{equation}
\begin{aligned}
\tilde{\varphi}(x, t) &= \int_0^t \frac{1}{4 \pi \left(t-s\right)} 
\exp\left( -\frac{\left|x_{s_*}\right|^2 \cdot \left|x-x_{s_*}^*\right|}{4 \left(t-s\right)} \right) \, ds \\
&= \int_0^t \frac{1}{4 \pi \left( \frac{t-s}{\left|x_{s_*}\right|^2} \right)} 
\exp\left( -\frac{\left|x-x_{s_*}^*\right|}{4 \left( \frac{t-s}{\left|x_{s_*}\right|^2} \right)} \right)
\frac{1}{\left|x_{s_*}\right|^2} \, ds \\
&= \int_0^{\frac{t}{\left|x_{s_*}\right|^2}} 
\frac{1}{4 \pi \left( \frac{t}{\left|x_{s_*}\right|^2} - s \right)} 
\exp\left( -\frac{\left|x-x_{s_*}^*\right|}{4 \left( \frac{t}{\left|x_{s_*}\right|^2} - s \right)} \right) \, ds \\
&= \varphi_{x_{s_*}^*} \left( x, \frac{t}{\left|x_{s_*}\right|^2} \right) \\
&= \varphi_{x_{s_*}^*} \left( x, t' \right), \quad t' = \frac{t}{\left|x_{s_*}\right|^2}.
\end{aligned}
\end{equation}

For $\varphi_{x_{s_*}^*}$, it satisfies
 \begin{equation}
\left\{\begin{array}{l}
\varphi_{x_{s_*}^{*}t^{'}}-\Delta \varphi=\delta_{x_{s_*}^{*}}, \quad (x,t)\in R^2 \times(0, T), \\
\varphi\left(t^{\prime}=0\right)=0,
\end{array}\right.
\end{equation}
and $(\varphi_{x_{s_*}^{*}})_t=(\varphi_{x_{s_*}^{*}})_{t^{'}}\frac{1}{|x_{s_*}|^2}$, so we have $(\varphi_{x_{s_*}^{*}})_{t^{'}}=|x_{s_*}|^2(\varphi_{x_{s_*}^{*}})_t$.

In \( \Omega\), the following holds:  
\[
\left(\varphi_{x_{s_*}^*}\right)_{t'} - \Delta \varphi_{x_{s_*}^*} = 0, \quad \text{where } x_{s_*}^* \notin \Omega.
\]  
This implies:  
\[
\left|x_{s_*}\right|^2 \left(\varphi_{x_{s_*}^*}\right)_t = \Delta \varphi_{x_{s_*}^*}.
\]  
Equivalently,  
\[
\left(\varphi_{x_{s_*}^*}\right)_{t} - \Delta \varphi_{x_{s_*}^*} = \left(\frac{1}{\left|x_{s_*}\right|^2} - 1\right) \Delta \varphi_{x_{s_*}^*}, \quad \text{in } \Omega.
\]

Let $v=\varphi-\tilde{\varphi}$ and $v$ satisfies
\begin{equation}
\left\{\begin{array}{l}
v_t-\Delta v=\delta_{x_{s_*}}-\left(\frac{1}{\left|x_{s_*}\right|^2}-1\right) \Delta \varphi_{x_{s_*}^*}, ~~\text { in } \Omega, \\
v \mid _{\partial \Omega}=0, \\
v(t=0)=0.
\end{array}\right.
\end{equation}

Let $w$ satisfies 
\begin{equation}
\left\{\begin{array}{l}
w_t-\Delta w=-\left(\frac{1}{\left|x_{s_*}\right|^2}-1\right) \Delta \varphi_{x_{s_*}^*}~, ~ \text { in }  ~\Omega,\\
w \mid_{\partial \Omega}=0,  \\
w(t=0)=0.
\end{array}\right.
\end{equation}
Because $u=v-w$, and $v$ and $w$ are smooth function on boundary, we can prove $\frac{\partial u}{\n}(z,t)$ in \eqref{eq-ibvp} is smooth.
  \end{proof}

\bibliographystyle{plainurl} 
\bibliography{point_source}

\end{document}